\documentclass[11pt]{article}

\usepackage{mathtools}
\usepackage{amsmath, amsthm, amsfonts,amssymb}
\usepackage{enumitem}
\usepackage{graphicx}
\usepackage{colortbl}
\usepackage{tikz}
\usepackage[utf8]{inputenc}
\usepackage{esint}
\usepackage{mathrsfs}
\usepackage{subfig,float}
\usepackage[T1]{fontenc}
\usepackage{mathrsfs}  
\usepackage{bbm} 
\usepackage{enumitem}
\usepackage{mathtools}
\usepackage{dsfont}
\usepackage{ushort}
\usepackage{accents}
\newcommand{\ubar}[1]{\underaccent{\bar}{#1}}

\usepackage[english,french]{babel}

\def\d{\,\mathrm{d}}
\def\dv{\d v}

\newcommand{\sign}{\text{sgn}}
\DeclareMathOperator{\hess}{Hess}

\usepackage[colorlinks=true,linkcolor=blue,citecolor=blue,urlcolor=blue,breaklinks]{hyperref}

\usepackage{hyperref}
\usepackage{breakurl}
\usepackage[square,sort,comma,numbers]{natbib}
\usepackage{url}
\bibliographystyle{plain}
\nocite{}

\usepackage[square,sort,comma,numbers]{natbib}
\bibliographystyle{plain}

\addtolength{\oddsidemargin}{-.875in}
\addtolength{\evensidemargin}{-.875in}
\addtolength{\textwidth}{1.75in}

\addtolength{\topmargin}{-.875in}
\addtolength{\textheight}{1.75in}

\definecolor{lpink}{rgb}{0.96, 0.76, 0.76}
\definecolor{dpink}{rgb}{0.97, 0.51, 0.47}
\definecolor{sky}{rgb}{0.53, 0.81, 0.92}
\definecolor{salmon}{rgb}{1.0, 0.55, 0.41}
\definecolor{orman}{rgb}{0.24, 0.7, 0.44}
\definecolor{aciksari}{rgb}{0.91, 0.84, 0.42}
\definecolor{dgrey}{rgb}{0.52, 0.52, 0.51}

\def\R{\mathbb{R}}

\def\d{\,\mathrm{d}}

\def\p{\,\partial}


\newtheorem{thm}{Theorem}[section]

\newtheorem{lem}[thm]{Lemma}
\newtheorem{prp}[thm]{Proposition}

\theoremstyle{definition}
\newtheorem{dfn}[thm]{Definition}
\theoremstyle{remark}
\newtheorem{remark}[thm]{Remark}

\hypersetup{pdftitle={RunAndTumbleHarris}}
\hypersetup{pdfauthor={Josephine Evans  \& Havva Yoldaş }}

\author{Josephine Evans\footnote{Warwick Mathematics Institute, University of Warwick, Zeeman Building, Coventry CV4 7AL, United Kingdom. josephine.evans@warwick.ac.uk} 
	\and Havva Yoldaş \footnote{Delft Institute of Applied Mathematics, Faculty of Electrical Engineering, Mathematics and Computer Science, Delft University of Technology, Mekelweg 4, 2628 CD Delft, Netherlands. h.yoldas@tudelft.nl}}

\title{On the asymptotic behaviour of a run and tumble equation for bacterial chemotaxis}

\makeindex

\begin{document}
	\selectlanguage{english}
	
	\maketitle

	\vspace{-10pt}
	
	\selectlanguage{english}
	\begin{abstract}
		\noindent 
		We prove that linear and \emph{weakly} non-linear run and tumble equations converge to a unique steady state solution with an exponential rate in a weighted total variation distance. In the linear setting, our result extends the previous results to an arbirtary dimension $d \geq 1$ while relaxing the assumptions. The main challenge is that even though the equation is a mass-preserving, Boltzmann-type kinetic-transport equation, the classical $L^2$ hypocoercivity methods, e.g., by Dolbeault, Mouhot, Schmeiser (Tans. Amer. Math. Soc., 367(6):3807-3828, 2015) are not applicable for dimension $d>1$. We overcome this difficulty by using a probabilistic technique, known as Harris's theorem.  We also introduce a \emph{weakly} non-linear model via a non-local coupling on the chemoattractant concentration. This toy model serves as an intermediate step between the linear model and the physically more relevant non-linear models. We build  a stationary solution for this equation and provide an hypocoercivity result.  
		
	\end{abstract}
	
	\tableofcontents
	
	\section{Introduction and main results} \label{sec:intro}
	
	We consider a kinetic-transport equation which describes the movement of biological microorganisms biased towards a chemoattractant. The model is called the \emph{run and tumble} equation and introduced in \cite {A80, S74} based on some experimental observations \cite{BB72} on the chemotaxis of the bacteria called \emph{E. coli} towards amino-acids. The equation is given by 
	\begin{align} \label{eq:rt}
		\partial_t f + v\cdot \nabla_x f = \int_{\mathcal{V}} \left( T(x, v, v') f(t,x,v') - T(x, v', v) f(t,x,v) \right) \d v', \quad t \geq 0,\,  x \in \R^d,  \, v \in \mathcal{V}.
	\end{align} where $f:=f (t,x,v) \geq 0$ is the density distribution of microorganisms at time $t \geq 0$ at a position $x \in \mathbb{R}^d$, moving with a velocity $v \in \mathcal{V} \subseteq \R^d$. In \eqref{eq:rt}, $\mathcal{V} =B(0, V_0)$ is a centered ball with a unit volume and a radius $V_0>0$ so that $|\mathcal{V} | =1$. Microorganisms perform a biased movement along the gradient of the chemoattractant with a constant speed and they change their orientation at random times towards the regions where the chemoattractant concentration is higher. This biased random walk drives the microorganisms up the gradient of the chemoattactant density. The underlying process is also called a \emph{velocity jump process}. 
	
	The tumbling frequency $T$ describes the change in velocity from $v$ to $v'$ and it can be written as
	\begin{equation} \label{defn:T}
		T(t, x, v, v'): =T( m, v, v') = \lambda (m) \kappa(v,v'), 
	\end{equation} where $\lambda : \mathbb{R} \to [0, \infty) $ is the tumbling rate and $m$ is the gradient of the external signal $M$ along the direction of $v'$ and given by
	\begin{align} \label{defn:m}
		m = v' \cdot \nabla_x M, 
	\end{align} where $M$ depends on the chemoattractant concentration $S$ via 
	\begin{align} \label{defn:logS}
		M= m_0 + \log S, 
	\end{align} where $m_0\in \R_0^+$ represents the external signal in the absence of a chemical stimulus. In \eqref{defn:T}, the tumbling kernel $\kappa$ is a probability distribution on $\mathcal{V}$ and gives the probability of  tumbling from velocity $v$ to velocity $v'$ so that it satisfies
	\begin{align*}
		\int_{\mathcal{V}} \kappa(v,v') \d v' = 1. 
	\end{align*}
	We assume that the distribution of the change in the velocity due to tumbling is uniform, i.e. $\kappa \equiv 1$. 
	If the chemoattractant density $S(x)$ is a fixed function of $x$, then  the equation becomes linear. Together with the above assumptions, the linear run and tumble equation takes the form
	\begin{align} \label{eq:rt-linear}
		\begin{split}
			\p_t f +v \cdot \nabla_x f &= \int _{\mathcal{V}}\lambda (v' \cdot \nabla_x M) f(t,x, v') \d v' - \lambda (v \cdot \nabla_x M) f(t,x,v), \\
			f(0,x,v) &= f_0(x,v), \qquad x \in \mathbb{R}^d, \, v \in \mathcal{V},
		\end{split} 
	\end{align} 
	where the initial datum $f_0$ is a probability measure, i.e., $f_0 \in \mathcal{P} (\R^d \times \mathcal{V})$\footnotemark \footnotetext{$\mathcal{P} (\Omega)$ denotes the space of probability measures defined on $\Omega$.}.
	
	Apart from the linear equation, we consider a model where the chemoattractant concentration $S$ solves 
	\begin{align} \label{eq:weakly_nonlin}
		S (t,x)= S_\infty (x)(1+ \eta (N * \rho)(t,x)), 
	\end{align}where $\eta>0$ is a small constant, $N$ is a positive, smooth function with a compact support, $S_{\infty}$ is a smooth function and $\rho(t, x) := \int_{\mathcal V} f(t, x,v)\d v$ is the spatial marginal density of microorganisms. 
	We refer to the problem \eqref{eq:rt-linear}-\eqref{eq:weakly_nonlin} as the \emph{weakly non-linear} run and tumble equation. This model can be considered as an intermediate model between the linear equation and physically relevant non-linear equations, e.g., when the microorganisms produce the chemoattractant by themselves $S$ solves a Poisson-type equation with a source term as the density,
	\begin{align} \label{eq:S-Poisson}
		- \Delta_x S + \alpha S = \rho, 
	\end{align} where $\alpha \geq 0$ is the chemical degradation rate. This non-linear model was first introduced in \cite{A80, ODA88} and further studied in \cite{CMPS04}. 
	
	The analytical results on the long-time behaviour of kinetic models of chemotaxis are scarce in the literature. We give a brief summary below. The main reason is that the classical hypocoercivity methods are not applicable to the run and tumble equation. We dedicate Section \ref{sec:motiv} to a detailed discussion on this matter. In the linear case, the closest result to ours is \cite{MW17} where the authors prove exponential convergence towards a non-trivial stationary state in $d\geq 1$ assuming that $S(x)$ is a radially symmetric function. In this paper, the techniques we use allow us to remove the radial symmetry assumption on $S$, thus we are able to generalise the result to $d\geq 1$, and a wider class of possible tumbling rates, with completely constructive arguments. However, these techniques cannot be utilised in the non-linear case. Therefore, we introduce a weakly non-linear equation to understand the link between the linear equation and the non-linear one \eqref{eq:rt-linear}-\eqref{eq:S-Poisson}. A detailed discussion of this connection and how we choose the coupling \eqref{eq:weakly_nonlin} can be found in Section \ref{sec:discussion}.
	
	\paragraph{Summary of previous results}  In this paper, we are concerned with the long-time behaviour of the run and tumble equation in the case that the solutions exist globally in time. Therefore, we do not provide an existence result. However, we would like to give a brief summary of the previous works, including the study of the Cauchy problem. We remark that the global existence of solutions for the models we study in this paper can be obtained by following the strategy e.g. in \cite{CMPS04, MW17} since the tumbling frequency we consider can be bounded by the necessary terms.

	The linear run and tumble model was studied in numerous works including \cite{OH00, OH02, CRS15, MW17}. 
	In \cite{CRS15}, the authors proved the existence and uniqueness of a non-trivial stationary state and exponential decay to equilibrium as $t \to \infty$ in  dimension $d=1$ by using modified entropy approach due to \cite{DMS15}. An example of a tumbling frequency satisfying the assumptions in \cite{CRS15} is given by 
	\begin{align}  \label{eq:T-sign}
		T(x,v,v') = 1 + \chi \sign (x \cdot v), \quad  \chi \in (0,1),
	\end{align} where $\chi$ is called the chemotactic sensitivity. Recently in \cite{MW17}, this result was extended to higher dimensions $d\geq1$ by considering splitting techniques due to \cite{MS16}. These techniques are based on using the Krein-Rutman theorem for positive semigroups which do not satisfy the necessary compactness assumptions. The general form of the tumbling frequency considered in \cite{MW17} is given by
	\begin{align} \label{eq:T-general}
		T  (x,v,v') = 1 -\chi \sign (\p_t S + v \cdot \nabla_x S), \quad \chi  \in  (0,1). 
	\end{align} In \cite{MW17}, the authors further assumed that the concentration of the chemoattractant $S(x)$ is radially symmetric and decreasing in $x$ such that $S(x) \to 0$ as $|x| \to \infty$. This assumption simplifies the tumbling kernel \eqref{eq:T-general} to \eqref{eq:T-sign} since the radial symmetry assumption reduces the problem essentially to dimension $d=1$. In this paper, we are able to remove the radial symmetry assumption and obtain the exponential convergence towards a unique stationary state in dimension $d\geq 1$. As in our case, when the concentration of the chemoattractant $S$ is a fixed function of $x$ but not necessarily radially symmetric or strictly decreasing in $|x|$, we refer to it as the linear problem. However we remark that, in \cite{MW17}, the authors refer to a specific case of the run and tumble equation as the linear problem. What we call the linear equation in this paper refers to more general form of the run and tumble equation.
	
	For the Cauchy problem, there are global existence results in \cite{CMPS04, HKS05, BCGP08, CP96}, and blow up results in \cite{BC09}. Moreover, in \cite{C19}, the author showed the existence of traveling wave solutions of a non-linear run and tumble model which is coupled with two reaction-diffusion equations. This analytical result complements the experimental observations and computational studies in \cite{SCBBSP10, SCBBSP11}. We refer also to \cite{BC10} for a detailed review of existence and blow-up results for various kinetic models of chemotaxis.

	\paragraph{Summary of our main results} In this paper, we prove that there exists a unique, non-trivial steady state solution to both the linear and the weakly non-linear equations and that both the models converge to the respective stationary states with explicitly computable exponential rates. In the linear case, our results are obtained by means of Harris's theorem which is a probabilistic method in ergodic theory of Markov processes. Moreover, we build a unique stationary solution for the weakly non-linear equation \eqref{eq:rt-linear}-\eqref{eq:weakly_nonlin} via a fixed-point argument and we show that the solutions coverge to this stationary solution exponentially using a perturbation argument. Indeed, $S(t,x)$ in \eqref{eq:weakly_nonlin} can be treated as a perturbation of the linear equation whenever $(N* \rho)(t, x)$ is decreasing or $\eta$ is small. The explicit rates of convergence can be obtained in terms of the constants given in the assumptions. Our proofs are all constructive and the estimates are in the weighted total variation distances, valid for arbitrary dimension, i.e. $ x \in \R^d, \,d\geq 1$.
	
	\subsection{Assumptions and main results} \label{sec:assumptions}
	
	We assume that the tumbling rate increases when the microorganisms move far away from the regions where the chemoattractant density is high and the chemoattractant density decreases as $|x| \to \infty$. We make the 
	following hypotheses: 
	\begin{itemize}
		\item[\bf(H1)] \label{hyp:lambda}
		The tumbling rate $\lambda(m): \mathbb{R} \to (0, \infty)$ is a function of the form
		\begin{align} \label{asm:lambda}
			\lambda(m) = 1- \chi\psi(m),  \quad \chi \in (0,1)
		\end{align} where $\psi$ is a bounded (with $\|\psi\|_\infty \leq 1$), odd, increasing function and $m\psi(m)$ is differentiable. 
		\item[\bf(H2)] \label{hyp:M}
		We suppose that  $M(x) \to-\infty$  as $|x| \to \infty$, $|\nabla_x M(x)|$ is bounded from above, and $\|\nabla_x M\|_\infty$ exists. Moreover, there exist $R\geq 0$ and $m_* >0$ such that whenever $|x|>R$ we have
		\[ |\nabla_x M(x)| \geq m_*. \]
		\item[\bf(H3)] \label{hyp:HessM}
		We suppose that $\hess (M)(x) \to 0$ as $|x|\to  \infty$ and  $|\hess(M) (x)|$ is bounded.
		\item[\bf(H4)] \label{hyp:int_bound}
		There exists a constant $\tilde{\lambda}>0$, depending on $\psi$ and $\|\nabla_x M\|_{\infty}$, and an integer $k>0$, depending on $\psi$, such that 
		\begin{align} \label{eq:int_bound}
			\int_{\mathcal{V}} 	\psi(v'\cdot \nabla_x M(x)) v'\cdot \nabla_x M(x)  \d v'   \geq \tilde{\lambda} (\psi, \|\nabla_x M\|_{\infty}) |\nabla_xM(x)|^k.
		\end{align} 
	\end{itemize}

\begin{remark}
We remark here that in addition to assumption \eqref{asm:lambda}, our theorem in the weakly non-linear setting also requires $\psi$ to be Lipschitz. This is so that we can ensure that a small difference in $\rho$ will result in a small difference in $\lambda(v\cdot \nabla_x M)$ when $M$ depends continuously on $\rho$. This type of assumption would in fact be necessary for the standard linearization of a coupled run and tumble equation to make sense and we believe it is a strength of our results that we are able to deal with $\psi$ other than the sign function.
\end{remark}
	
In order to explain where {\bf (H4)} comes from and justify its use we briefly prove \eqref{eq:int_bound} in two cases in the following lemma:
	\begin{lem}
		If $\psi(z) = \sign (z)$ then \eqref{eq:int_bound} holds with $k=1$ and 
		\begin{align*}
			\tilde{\lambda} = \int_{-V_0}^{V_0}|v_1|(V_0^2-v_1^2)^{(d-1)/2} \frac{\pi^{(d-1)/2}}{\Gamma ((d-1)/2 +1)}.
		\end{align*}If $\psi$ is differentiable with $\psi'(0)>0$ then \eqref{eq:int_bound} holds with $k=2$, and $\tilde{\lambda}$ depends on the exact form of $\psi$.
	\end{lem}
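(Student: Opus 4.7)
The strategy is to set $u := \nabla_x M(x) \in \R^d$ and reduce the bound to a statement of the form
\[
I(u) := \int_{\mathcal{V}} \psi(v' \cdot u)\, (v' \cdot u) \,\mathrm{d}v' \geq \tilde{\lambda}\,|u|^k .
\]
Because $\mathcal{V}=B(0,V_0)$ is rotationally invariant, I can rotate coordinates so that $u = |u|\,e_1$, and then the integrand only depends on the first component $v'_1$, reducing $I(u)$ to a one-parameter integral in $|u|$. Note also that since $\psi$ is odd and increasing, the integrand $\psi(v' \cdot u)(v'\cdot u)$ is nonnegative, so there are no sign cancellations to worry about.

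For the first case $\psi = \sign$, the integrand simplifies to $|v'\cdot u| = |u|\,|v'_1|$, so
\[
I(u) = |u| \int_{B(0,V_0)} |v'_1| \,\mathrm{d}v'.
\]
Evaluating the remaining integral by Fubini, slicing the ball $B(0,V_0) \subset \R^d$ by hyperplanes $\{v'_1 = c\}$ with $(d-1)$-dimensional volume $\tfrac{\pi^{(d-1)/2}}{\Gamma((d-1)/2+1)}(V_0^2 - c^2)^{(d-1)/2}$, yields exactly the stated constant $\tilde{\lambda}$ with $k=1$.

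For the second case, write $g(u) := I(u)/|u|^2$ for $u \neq 0$. Because $\psi$ is differentiable at $0$, dominated convergence and the first-order Taylor expansion $\psi(v'\cdot u) = \psi'(0)(v'\cdot u) + o(|u|)$ give
\[
\lim_{u \to 0} g(u) \;=\; \psi'(0)\int_{\mathcal{V}} (v'\cdot e_1)^2 \,\mathrm{d}v' \;=\; \psi'(0)\, c_d,
\]
where $c_d := \int_{\mathcal{V}} v_1^2 \,\mathrm{d}v' > 0$. This defines a continuous extension of $g$ to all of $\R^d$, with $g(0) = \psi'(0)c_d > 0$. For $u \neq 0$ the positivity of $g(u)$ is immediate from $\psi$ being odd and strictly increasing near $0$ (so $\psi(z)z > 0$ for $z \neq 0$, forcing $I(u) > 0$). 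Since $|\nabla_x M|$ is assumed bounded by $\|\nabla_x M\|_\infty$, the argument $u$ ranges in the compact ball $\overline{B(0,\|\nabla_x M\|_\infty)}$, on which the continuous positive function $g$ attains a positive minimum $\tilde{\lambda}$. This yields Hypothesis \ref{hyp:int_bound} with $k=2$.

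The only delicate point is justifying the continuity (and positivity) of $g$ at $u=0$; once the Taylor expansion of $\psi$ near $0$ is used with dominated convergence, the compactness argument finishes the second case immediately. The first case is entirely a direct computation, with the only care needed being the sphere volume normalisation.
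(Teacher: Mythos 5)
Your proof is correct and follows essentially the same route as the paper: rotate so that $\nabla_x M$ points along $e_1$, compute the sign case exactly via slicing the ball, and for the differentiable case use the Taylor expansion of $\psi$ at $0$ together with continuity and the boundedness of $|\nabla_x M|$. Your packaging of the second case as the positive minimum of the continuous function $g(u)=I(u)/|u|^2$ on a compact ball is a slightly cleaner formalisation of the paper's informal limiting argument, but the underlying idea is identical.
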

	\begin{proof}
		Since $\mathcal{V}$ is a ball of radius $V_0$, by rotation we obtain
		\begin{align*}
			\int_{\mathcal{V}} 	\psi(v'\cdot \nabla_x M(x)) v'\cdot \nabla_x M(x)  \d v'  = 	\int_{\mathcal{V}} \psi(v_1 |\nabla_x M(x)|) v_1 |\nabla_x M(x)| \mathds{1}_{\{v_2^2 + \dots + v_d^2 \leq V_0^2-v_1^2\}} \d v_1. 
		\end{align*} Integrating out $v_2, \dots, v_d$ gives
		\begin{align} \label{int1}
			\int_{-V_0}^{V_0}\psi (v_1 |\nabla_x M(x)|) v_1 |\nabla_x M(x)| (V_0^2-v_1^2)^{(d-1)/2} \frac{\pi^{(d-1)/2}}{\Gamma((d-1)/2 +1)} \d v_1.
		\end{align}
		We can bound \eqref{int1} below by
		\begin{align*} 
			\frac{\pi^{(d-1)/2}}{\Gamma((d-1)/2+2)} (V_0/2)^{d-1} \int_{-V_0/2}^{V_0/2} \psi(v_1|\nabla_x M(x)|) v_1 |\nabla_x M(x)| \d v_1. 
		\end{align*} From this point we extract the first result on $\psi(z) = \sign (z)$.

		For the case where $\psi$ is differentiable, we continue using the fact that $ \frac{\pi^{d/2}}{\Gamma(d/2 +1)} V_0^{d} = 1$ and changing variables from $v_1$ to $y = v_1|\nabla_xM|$, then the above bound is equal to
		\begin{align*}
			\frac{1}{2^{d-1}\sqrt{\pi}}\frac{\Gamma(d/2+1)}{\Gamma((d-1)/2+1)} \frac{1}{|\nabla_x M|V_0} \int_{-V_0 |\nabla_x M|/2}^{V_0|\nabla_x M|/2} \psi(y)y \d y. 
		\end{align*}
		Note that $\psi(y)y$ is a positive, even function which is $0$ at $y=0$. We have an average of $\psi(y)y$ over $-V_0|\nabla_xM|, V_0 |\nabla_x M|$ and it approaches to $0$ as $|\nabla_x M(x)| \to 0$. 
		Since $\psi$ is differentiable then $y\psi(y) \approx \psi'(0) y^2$ when $y$ is small so as $|\nabla_x M| \to 0$ we obtain
		\begin{align*}
			\frac{1}{2^{d-1}\sqrt{\pi}}\frac{\Gamma(d/2+1)}{\Gamma((d-1)/2+1)} \frac{1}{|\nabla_x M|V_0} \int_{-V_0 |\nabla_x M|/2}^{V_0|\nabla_x M|/2} \psi(y)y \d y	 \approx   \frac{1}{2^{d-1}\sqrt{\pi}}\frac{\Gamma(d/2+1)}{\Gamma((d-1)/2+1)}\psi'(0) \frac{V_0^2}{12}  |\nabla_x M|^2.
		\end{align*}
		This approximation only holds true as $|\nabla_x M|$ goes to 0, but since $|\nabla_x M|$ is a bounded function, and $\frac{1}{2^{d-1}\sqrt{\pi}}\frac{\Gamma(d/2+1)}{\Gamma((d-1)/2+1)} \frac{1}{|\nabla_x M|V_0} \int_{-V_0 |\nabla_x M|/2}^{V_0|\nabla_x M|/2} \psi(y)y \d y$ is a continuous function of $|\nabla_x M|$ and we have the result.
	\end{proof}
	
	We state the main results of the paper below. Their	proofs are given at the end of Sections \ref{sec:Harris} and \ref{sec:non-linear} respectively.
	
	\begin{thm}[The linear equation]
		\label{thm1}
		Suppose that $ t \mapsto f_t $ is the solution of Equation \eqref{eq:rt-linear} with initial data $f_0 \in \mathcal{P} (\R^d \times \mathcal{V})$. 
		We suppose that hypotheses \textbf{\em (H1)--(H4)} are satisfied. Then there exist positive constants $C, \sigma$ (independent of $f_0$) such that 
		\begin{align} \label{eq:thm1}
			\|f_t - f_{\infty} \|_* \leq C e^{-\sigma t} \| f_0 -f_{\infty}\|_*, 
		\end{align} where $f_{\infty}$ is the unique steady state solution to Equation \eqref{eq:rt-linear}. The norm $\|\cdot\|_*$ is the weighted total variation 
		defined by 
		\begin{align} \label{eq:norm1}
			\|\mu\|_*  := \int_{\R^d} \int_{\mathcal{V}}  \left( 1 -\gamma v\cdot \nabla_x M(x) -\beta \gamma \psi(v \cdot \nabla_x M(x)) v \cdot \nabla_x M(x)\right)e^{-\gamma M(x)}  |\mu| \d v \d x,
		\end{align} where $\gamma, \beta$ are positive constants which can be computed explicitly and are suffciently small so that the weight in $\|\cdot\|_*$ is positive.
		%
		%
		
		Furthermore, if there exist positive constants $ \ushort C,  \bar C,$ and $\alpha$ such that
		\begin{align*}
			\ushort C -\alpha \langle x \rangle \leq M (x) \leq \bar C- \alpha \langle x \rangle,
		\end{align*} then using equivalence of norms we can show a contraction as in $\eqref{eq:thm1}$ (with different constants $C$ and $\sigma$) in the weighted total variation norm with the weight $e^{\delta \langle x \rangle}$  
		\begin{align} \label{eq:norm2}
			\| \mu\|_{**} := \int_{\R^d} \int_{\mathcal{V}}  e^{\delta \langle x \rangle} |\mu| \d v \d x, 
		\end{align} 
		where $\delta$ is a small enough constant depending on $M$ and $\langle x \rangle = \sqrt{1+|x|^2}$.
	\end{thm}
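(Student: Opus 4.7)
\medskip

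\noindent\textbf{Proof plan.} The strategy is to apply Harris's theorem (in the Hairer--Mattingly formulation) to the Markov semigroup $(P_t)$ generated by the run-and-tumble process on $\R^d\times\mathcal{V}$, whose generator on test functions is
\begin{equation*}
\mathcal{L}\phi(x,v)=v\cdot\nabla_x\phi(x,v)+\lambda(v\cdot\nabla_x M(x))\bigl(\bar\phi(x)-\phi(x,v)\bigr),\qquad \bar\phi(x):=\int_\mathcal{V}\phi(x,v)\,\mathrm{d}v .
\end{equation*}
Harris's theorem requires two ingredients: (i) a Foster--Lyapunov drift $\mathcal{L}V\le -\alpha V+c\mathbf{1}_K$ for some compact $K$, and (ii) a Doeblin minorisation on the sublevel sets of $V$. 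Given these, one obtains existence and uniqueness of an invariant probability measure $f_\infty$ and an exponential contraction $\|P_t\mu-P_t\nu\|_V\le Ce^{-\sigma t}\|\mu-\nu\|_V$ in the weighted total variation norm $\|\mu\|_V:=\int(1+V)\,|\mu|$; (\ref{eq:thm1}) then follows by choosing $V$ so that $1+V$ matches the weight in (\ref{eq:norm1}).

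\medskip

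\noindent\textbf{Step 1: Lyapunov function.} I choose
\begin{equation*}
V(x,v)=\bigl(1-\gamma\, g(x,v)-\beta\gamma\,\psi(g(x,v))\,g(x,v)\bigr)\,e^{-\gamma M(x)},\qquad g(x,v):=v\cdot\nabla_x M(x),
\end{equation*}
with parameters $\gamma,\beta>0$ to be fixed small. A direct calculation gives
\begin{equation*}
\mathcal{L}V \,e^{\gamma M} = v\cdot\nabla_x W - \gamma g W + \lambda(g)(\bar W - W),\qquad W:=1-\gamma g-\beta\gamma\psi(g)g,
\end{equation*}
where $v\cdot\nabla_x W = -\gamma v^{T}\hess(M)v - \beta\gamma v^{T}\hess(M)v \,(g\psi(g))'$ is bounded by $|\hess(M)|$ and hence tends to $0$ at infinity by Hypothesis \ref{hyp:HessM}. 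Since $\int_\mathcal{V} v\,\mathrm{d}v=0$, the average $\bar W(x)=1-\beta\gamma\,\Lambda(x)$ with $\Lambda(x):=\int_\mathcal{V}\psi(g)g\,\mathrm{d}v\ge \tilde\lambda\,|\nabla_x M(x)|^k$ by Hypothesis \ref{hyp:int_bound}. Expanding $\lambda(g)=1-\chi\psi(g)$ and simplifying, the leading-order terms in $|x|$ large become
\begin{equation*}
\mathcal{L}V\,e^{\gamma M}\;=\;-\beta\gamma\,\Lambda(x)\;-\;\chi\gamma\,g\,\psi(g)\;+\;\mathcal{E}(x,v),
\end{equation*}
where $\mathcal{E}$ collects (a) $O(|\hess M|)$ contributions and (b) quadratic $O(\gamma^2)$ corrections. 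Using $|\nabla M|\ge m_*$ for $|x|>R$ (Hypothesis \ref{hyp:M}), the first term dominates: $-\beta\gamma\Lambda(x)\le -\beta\gamma\tilde\lambda\, m_*^{k}$, while the second is pointwise non-positive since $g\psi(g)\ge 0$. Picking $\beta$ small relative to $\chi$ to control the $g^2$ positive terms, and then $\gamma$ small enough so that $W\ge 1/2$ (so that $V\simeq e^{-\gamma M}$) and $\mathcal{E}$ is absorbed, I conclude $\mathcal{L}V\le -\alpha V$ outside a large ball and $\mathcal{L}V\le -\alpha V+c$ globally.

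\medskip

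\noindent\textbf{Step 2: Minorisation.} The sublevel sets $\{V\le R\}$ are compact in $x$ (since $V\to\infty$ as $|x|\to\infty$) and contained in some $B_{R'}\times\mathcal{V}$. For such starting data I construct a lower bound on the law of $(X_T,V_T)$ as follows: with positive probability, the particle tumbles once in $[0,T/3]$ (so the new velocity is uniform on $\mathcal{V}$ by $K\equiv 1$), then streams ballistically, then tumbles again in $[2T/3,T]$. Since $\lambda$ is bounded above and below on compact sets, the waiting-time densities, together with the uniform measure on $\mathcal{V}$ from the tumble, give an explicit lower bound $P_T((x,v),\cdot)\ge\epsilon\,\nu(\cdot)$ for all $(x,v)$ in the sublevel set, where $\nu$ is supported on $B_r\times\mathcal{V}$ for a suitable $r$. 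This is the Doeblin condition required by Harris.

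\medskip

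\noindent\textbf{Step 3: Conclusion and equivalence of norms.} Applying Harris's theorem yields a unique invariant probability $f_\infty$ and a contraction $\|P_t\mu\|_{V}\le Ce^{-\sigma t}\|\mu\|_V$ on mean-zero measures, which is precisely (\ref{eq:thm1}) with the weight (\ref{eq:norm1}) after verifying $1+V$ is equivalent to that weight (they differ only by the factor $e^{-\gamma M}$ versus itself plus a bounded shift). For the second claim, when $C_1-\alpha\langle x\rangle\le M(x)\le C_2-\alpha\langle x\rangle$, the weight $e^{-\gamma M}$ is equivalent to $e^{\gamma\alpha\langle x\rangle}$ up to constants; choosing $\delta=\gamma\alpha$ small enough that this remains compatible with the Lyapunov construction, equivalence of norms transfers the exponential contraction from $\|\cdot\|_*$ to $\|\cdot\|_{**}$, with modified constants.

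\medskip

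\noindent\textbf{Main obstacle.} The delicate point is the Lyapunov calculation in Step~1: the naive choice $V_0=e^{-\gamma M}$ gives $\mathcal{L}V_0=-\gamma g\,V_0$, which has indefinite sign pointwise in $v$ (though zero mean). The $v$-dependent correction $-\gamma g-\beta\gamma\psi(g)g$ is designed precisely so that its tumbling average produces the strictly negative term $-\beta\gamma\Lambda(x)$, made coercive at infinity by Hypothesis \ref{hyp:int_bound}. Balancing the sizes of $\gamma,\beta,\chi$ against the $\hess(M)$ errors and the quadratic-in-$\gamma$ cross terms, while simultaneously keeping $W>0$ so that $V$ is a genuine weight, is the heart of the argument and fixes the constants $\gamma,\beta$ appearing in (\ref{eq:norm1}).
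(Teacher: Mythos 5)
Your proposal is correct and follows essentially the same route as the paper: Harris's theorem with the identical Lyapunov weight $\phi=(1-\gamma v\cdot\nabla_xM-\beta\gamma\psi(v\cdot\nabla_xM)v\cdot\nabla_xM)e^{-\gamma M}$, the drift made coercive via Hypothesis \ref{hyp:int_bound} and the smallness of $\hess(M)$ at infinity, and a minorisation obtained from two tumbling events interleaved with free transport (the paper phrases this as a Duhamel iteration $\mathcal{T}\mathcal{J}\mathcal{T}\mathcal{J}\mathcal{T}$). Your parameter bookkeeping is slightly looser than the paper's explicit choices ($\beta=\chi/(1+\chi)$ and the case distinction on $k$ via $m_*^{k-2}$), but the argument is the same.
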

\begin{remark}
We remark briefly on the Cauchy theory for this equation. Since it is linear we believe that the simplest way to see that the equation is well posed for initial data in $\mathcal{P}(\mathbb{R}^d \times \mathcal{V})$ is to directly construct a Markov process for Equation \eqref{eq:rt-linear}. We can do this by generating a series of jump times $J_1, J_2, \dots$ which are a Poisson process with rate $(1+\chi)$ and a series of post jump velocities $V_1, V_2, \dots$ which are independent and drawn from the uniform measure on $\mathcal{V}$ and lastly a series of thinning random variables $U_1, U_2, \dots$ which are i.i.d. drawn from the uniform distribution on $[0,1+\chi]$. Then we define a piecewise deterministic Markov process $(X_t, V_t)$ with $(X_0, V_0)$ having the prescribed distribution of the initial data and where if $t \in (J_i, J_{i+1})$ we have $X_t = X_{J_i} + (t-J_i)V_{J_i}$ and $V_t = V_{J_i}$ then we set $V_{J_{i+1}} = V_{i+1}$ if $U_{i+1} \leq \lambda(V_{J_i} \cdot \nabla_x M(X_{J_{i+1}})$ and $V_{J_{i+1}} = V_{J_i}$ otherwise.
\end{remark}
	\begin{thm} [The weakly non-linear equation]
		\label{thm2} 
		Suppose that $ t \mapsto F_t $ is the solution of Equation \eqref{eq:rt-linear} with the coupling \eqref{eq:weakly_nonlin} where we suppose that $N(x)$ is a positive, smooth function with a compact support, $\eta>0$ is a constant,  and $S_\infty(x)$ is a smooth function satisfying for some $\ubar  C, \bar C, \alpha >0$ that
		\begin{align*} 
			\ushort C -\alpha \langle x \rangle \leq M_\infty (x): =\log(S_\infty(x))\leq \bar C - \alpha \langle x \rangle, 
		\end{align*} where  $\langle x\rangle = \sqrt{1 + x^2}$. We suppose that hypotheses \textbf{\em (H1)--(H4)} are satisfied and that $\psi$ is a Lipschitz function. Then there exist some constant $\tilde{C}$ depending on $\ushort C, \bar C,$ and $\alpha$ such that if $\eta < \tilde{C}$ then there exists a unique steady state solution to Equation \eqref{eq:rt-linear}-\eqref{eq:weakly_nonlin}.
		Suppose further that, any initial data $F_0 \in \mathcal{P} (\R^d \times \mathcal{V})$ satisfying
		\begin{align*} 
			\|F_0\|_{**}  < \frac{1}{4} \left(\frac{\sigma^2}{4\eta \chi V_0 D \|\psi'\|_\infty \|\nabla_x N\|_\infty }-C^*\right),
		\end{align*} where $\sigma, D$ and $C^*$ are found in the proofs of Theorem \ref{thm1}, Proposition \ref{prop:unifeta} and Lemma \ref{lem:moment_weakly_non-linear} respectively, then we have that there exists a steady state of \eqref{eq:rt-linear} with the coupling \eqref{eq:weakly_nonlin} which we call $F_\infty$ and furthermore
		\begin{align*}
			\| F_t - F_\infty\|_{**} \leq Ce^{-\sigma t/2} \|F_0 -F_\infty\|_{**},
		\end{align*} where $C$ and $\sigma$ are some positive constants, and $\|\cdot\|_{**}$ is defined in \eqref{eq:norm2}.
	\end{thm}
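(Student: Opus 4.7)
The proof naturally splits into two parts: first construct a steady state for the weakly non-linear equation via a fixed-point argument, and then establish exponential convergence by viewing the weakly non-linear dynamics as a perturbation of a linear run-and-tumble equation whose decay is already controlled by Theorem \ref{thm1}.

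For existence and uniqueness of $f_\infty$, I would set up a map on spatial densities. Given a probability density $\rho \in \mathcal{P}(\R^d)$ with a suitable weighted moment, define $S_\rho := S_\infty(1 + \eta N*\rho)$ and $M_\rho := \log S_\rho = M_\infty + \log(1 + \eta N*\rho)$. For $\eta$ small enough (here the smoothness and compact support of $N$ are used), $M_\rho$ still satisfies Hypotheses \ref{hyp:M}--\ref{hyp:int_bound} with constants depending only on $M_\infty$, $N$, and $\eta$; the upper/lower bound assumption on $M_\infty$ guarantees the required exponential decay of $S_\rho$. By Theorem \ref{thm1}, the frozen linear equation with potential $M_\rho$ admits a unique invariant probability measure $f_\infty^\rho$, and we can define $\Phi(\rho) := \int_{\mathcal{V}} f_\infty^\rho \d v$. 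I would then prove that $\Phi$ is a contraction on the closed ball $\{\rho : \|\rho\|_{**} \leq R\}$ in the weighted total variation norm for a well-chosen $R$: the key estimate is a Lipschitz bound of the form $\|f_\infty^{\rho_1} - f_\infty^{\rho_2}\|_{**} \leq C\eta\|\rho_1 - \rho_2\|_{**}$, obtained by differentiating the steady-state identity in $\rho$ and exploiting the spectral gap $\sigma$ from Theorem \ref{thm1} applied to the appropriate linearisation, together with the Lipschitz assumption on $\psi$. Choosing $\eta < \tilde{C}$ with $\tilde{C}$ depending on $\sigma$, $C_1, C_2, \alpha$ and the constants from Proposition \ref{prop:unifeta} makes this contraction, and the Banach fixed-point theorem yields a unique $\rho_\infty$ hence a unique $f_\infty$.

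For exponential convergence, I would write the equation satisfied by $g_t := f_t - f_\infty$ and split the right-hand side as $\mathcal{L}_\infty g_t + \mathcal{R}(f_t)$, where $\mathcal{L}_\infty$ is the linear run-and-tumble generator associated with the frozen potential $M_\infty^{\rho_\infty} := \log S_\infty(1 + \eta N*\rho_\infty)$ and $\mathcal{R}(f_t)$ is the non-linear remainder coming from replacing $M_\infty^{\rho_\infty}$ by $M(t) := \log S_\infty(1 + \eta N*\rho_t)$. A direct computation gives
\begin{equation*}
\mathcal{R}(f_t)(x,v) = \int_{\mathcal{V}} \bigl[\lambda(v'\cdot \nabla_x M(t)) - \lambda(v'\cdot \nabla_x M_\infty^{\rho_\infty})\bigr] f_t(x,v') \d v' - \bigl[\lambda(v\cdot\nabla_x M(t)) - \lambda(v\cdot\nabla_x M_\infty^{\rho_\infty})\bigr] f_t(x,v),
\end{equation*}
which, using $\lambda = 1 - \chi\psi$, the Lipschitz bound on $\psi$, and the regularity of $N$, satisfies $\|\mathcal{R}(f_t)\|_{**} \leq \eta\chi V_0 \|\psi'\|_\infty \|\nabla_x N\|_\infty \|\rho_t-\rho_\infty\|_{**}(1+\|f_t\|_{**})$, controlled in turn by $\|g_t\|_{**}$ via the moment bound from Lemma \ref{lem:moment_weakly_non-linear} (providing $C^*$) and the uniform-in-$\eta$ estimate from Proposition \ref{prop:unifeta} (providing $D$).

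Applying Duhamel's formula with the semigroup $(e^{t\mathcal{L}_\infty})_{t\geq 0}$, and using the exponential contraction $\|e^{t\mathcal{L}_\infty} g\|_{**} \leq Ce^{-\sigma t}\|g\|_{**}$ from Theorem \ref{thm1} applied to the frozen linear problem, yields
\begin{equation*}
\|g_t\|_{**} \leq Ce^{-\sigma t}\|g_0\|_{**} + C\int_0^t e^{-\sigma(t-s)}\,\eta\chi V_0 \|\psi'\|_\infty \|\nabla_x N\|_\infty (C^* + \|g_s\|_{**})\|g_s\|_{**}\d s.
\end{equation*}
A Gr\"onwall-type bootstrap on $u(t) := e^{\sigma t/2}\|g_t\|_{**}$ then closes the estimate: as long as $u$ stays bounded, the non-linear term contributes at most $\eta\chi V_0 D \|\psi'\|_\infty \|\nabla_x N\|_\infty (C^* + u(t)) u(t) \cdot 2/\sigma$ after the time integral, and the explicit smallness condition $\|f_0\|_{**} < \frac{1}{4}(\sigma^2/(4\eta\chi V_0 D \|\psi'\|_\infty\|\nabla_x N\|_\infty) - C^*)$ is precisely the algebraic threshold ensuring that the quadratic inequality for $\sup_{s\leq t} u(s)$ admits an a priori bound propagating from $t=0$. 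The main obstacle I anticipate is the bookkeeping in this last step: one must keep the weighted norm $\|\cdot\|_{**}$ compatible with both the contraction from Theorem \ref{thm1} (originally proved in $\|\cdot\|_*$ and transferred by equivalence of norms under the $\langle x\rangle$-decay hypothesis on $M_\infty$) and the non-linear remainder estimate, while using the Lipschitz hypothesis on $\psi$ quantitatively to get the factor $\|\psi'\|_\infty$ rather than merely $\|\psi\|_\infty$.
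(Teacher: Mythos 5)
Your proposal follows essentially the same route as the paper: a Banach fixed-point argument for the steady state (the paper contracts the map $G(M)=\log(S_\infty(1+\eta N*\rho^M))$ in $W^{1,\infty}$ rather than iterating on $\rho$ in $\|\cdot\|_{**}$, but these are equivalent via the one-to-one correspondence between $M$ and $\rho^M$, and the key Lipschitz estimate on $M\mapsto f_\infty^M$ is obtained in both cases from the uniform spectral gap of Proposition \ref{prop:unifeta} combined with a Duhamel/Gr\"onwall comparison of the frozen semigroups), followed by the same Duhamel-plus-Gr\"onwall perturbation argument around the frozen linear semigroup at the fixed point, with the quadratic smallness condition on $\|f_0\|_{**}$ closing the bootstrap exactly as in Lemma \ref{lem:lastlemma}. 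The argument is correct and matches the paper's structure.
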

\begin{remark}
A discussion of the Cauchy theory for the weakly non-linear equation can be found in the appendix.
\end{remark}

	\begin{remark} 
		Hypotheses \textbf{(H2)--(H4)} can be verified also in the case of the Poisson coupling \eqref{eq:S-Poisson}. The solution of $-\Delta W_y(x) + \alpha W_y(x) =\delta_x$ is called \emph{Yukawa potential} and given by the Green's function 
		\begin{align*}
			W_y(x) = \int_{0}^{\infty} \frac{1}{(4 \pi y)^{d/2} }\exp \left(- \frac{|x|^2}{4y}-\alpha y\right)\d y,
		\end{align*} and
		\begin{align*}
			-\frac{\log W_y(x)}{\sqrt{\alpha }|x|} \to 1 \text{  as  } |x| \to \infty.
		\end{align*} for dimension $d \geq 1$ (see \cite{LL01}, Theorem 6.23). We can see that $|\nabla_x M(x)|$ and $|\hess (M)(x)|$ are bounded and $\hess (M)(x) \to 0 $ as $|x| \to \infty$, where $M(x) =  \log W_y(x)$.
		
		Moreover the solution of $-\Delta_x S+ \alpha S=\rho $ is given by 
		\begin{align*}
			S := W_y * \rho = \int_{\R^d} W_y (x) \rho(t,y) \d y. 
		\end{align*} This case requires extra assumptions on $\rho$ in order to verify hypotheses \textbf{(H2)--(H4)}. 
		Since we do not deal with the Poisson coupling in this paper, we skip further details. 
	\end{remark}

	\paragraph{Structure of the paper} This paper is organised as follows. In Section \ref{sec:assumptions}, we listed the assumptions which are needed throughout the paper and presented the main results. We dedicated Section \ref{sec:motiv} to explaining the motivation, methodology and the novelty of our results. In Section \ref{sec:macroscopic}, we comment on the connection between the run and tumble equation and an aggregation-diffusion equation obtained as a parabolic scaling limit of the kinetic equation. In Section \ref{sec:Harris}, after stating Harris's theorem, in the subsequent two subsections we show how we verify two hypotheses of Harris's theorem for the linear run and tumble equation. We give the proof of Theorem \ref{thm1} at the end of Section \ref{sec:Harris}. Section \ref{sec:non-linear} is then devoted to the weakly non-linear case. In this section, we prove that there exists a unique stationary state solution and exponential convergence to this solution. Finally, Section \ref{sec:discussion} is dedicated to further discussions, particularly the connection between our results and the non-linear cases when different couplings for the chemoattractant density are considered.
	
	\subsection{Motivation, methodology and novelty}	\label{sec:motiv} 
	\paragraph{Motivation} 
	
	Our main motivation in this work is to simplify the proofs showing convergence to equilibrium for linear run and tumble equations, and extend their validity to a wider range of tumbling kernels and tumbling rates. We believe this moves the theory closer to being able to study the most biologically relevant tumbling rates and kernels, particularly existence and linear stability for the fully non-linear models. 
	
	Another motivation is that the equation is an interesting example within kinetic evolution equations. It differs from similar kinetic equations in a few key ways which we now describe. The linear equation \eqref{eq:rt-linear} has a structure similar to several equations appearing in the kinetic theory of gasses. In particular, we mention a \emph{linear Boltzmann equation} of the form
	\[ \partial_t f + v \cdot \nabla_x f - \nabla_x V(x) \cdot \nabla_v f = \left( \int_{\mathbb{R}^2} f(t,x,v') \d v'\right)\mathcal{M}(v) - f(t,x,v), \] where $f:= f (t,x,v)$ is the density distribution of particles at time $t$ in the phase space $(x,v)$, $V(x)$ is the confining potential, and $\mathcal{M}(v)$ is the Maxwellian velocity distribution. Long time behaviour for such equations is studied in the field of \emph{hypocoercivity}. We mention Villani's memoire \cite{V09} as the work which began the study of \emph{hypocoercivity} as a coherent behaviour common to many kinetic equations. The linear Boltzmann equation was first shown to converge to equilibrium by H\'erau in \cite{H07}. This also falls under the scope of the powerful general theorem in \cite{DMS15}. In \cite{CCEY20}, written by the authors and others, we show that Harris's theorem from Markov process theory provides an alternative way of showing convergence to equilibrium for the linear Boltzmann equation amongst other equations. 
	
	The run and tumble equation differs from the linear Boltzmann, and similar hypocoercive equations, in two key ways. Firstly, the confinement mechanism in the linear Boltzmann is through a `confining field' $\nabla_x V(x)$ whereas in the run and tumble equation the confinement is induced by the bias in the tumbling rate. This more complex confinement mechanism in the run and tumble equation is considerably more difficult to deal with. The second important difference between the linear Boltzmann equation and the run and tumble equation is the nature of the steady states. The steady states for the linear Boltzmann equation are simple and explicit and properties, such as Poincar\'e inequalities are immediate for such states. For the run and tumble equation, existence of a steady state is a problem in and of itself. The steady states for the run and tumble equation interact in a more complex way with the tools of hypocoercivity. For example, a condition in the theorem in \cite{DMS15} for proving a linear, mass-preserving kinetic equation is hypocoercive is that the steady state of the equation must be in the kernel of both the \emph{transport} and \emph{collision} operators separately. This is not possible for a steady state of the run and tumble equation, although we define the \emph{transport} ($v \cdot \nabla_ x $) and \emph{collision} (the right hand side of \eqref{eq:rt-linear}) parts of the operator. This behaviour is similar to non-equilibrium steady states in kinetic theory such as the ones discussed in \cite{AN00, EGKM13,CLM15,CELMM18, CELMM19}. Harris's theorem is well adapted to dealing with complex non-explicit steady states, and gives the existence of a steady state and the convergence to that steady state simultaneously. This fact was exploited by the first author in \cite{EM21} where we used Harris's theorem to find existence of a steady state for a non-linear kinetic equation with nonequilibrium steady states. Moreover, in \cite{CCEY20}, we showed that Harris's theorem can be applied efficiently to kinetic equations with nonlocal collision operators to obtain quantitative hypocoercivity results. In conclusion, the classical tools from hypocoercivity are difficult to apply on the run and tumble equation but Harris's approach gives promising results.
	
	Our motivation behind considering the weakly non-linear equation is to provide a useful intermediate step to treat the biologically more realistic couplings by means of exploring how a similar approach to ours in this paper can be applied to the fully non-linear case. This point is discussed in Section \ref{sec:discussion} in detail.
	
	\paragraph{Methodology} We obtain the spectral gap result in the linear case by applying Harris's thorem. In our case the Foster-Lyapunov condition which is necessary to use Harris's theorem is inspired from the moment estimates in \cite{MW17}. Using this type of argument to study asymptotic behaviour of biological models is a recent topic of research. One of the important recent results in this direction was \cite{G18} where the author used Doeblin's theorem, which is a predecessor of Harris's theorem, to obtain a spectral gap result for the renewal equation. In \cite{BCG20, BCGM19, CG20, CGY21, CY19}, Doeblin's and Harris's theorems were used for showing exponential contraction in weighted total variation distances for positive conservative and/or non-conservative semigroups, with several applications in population dynamics including the elapsed-time structured neuron population models, growth-diffusion and the growth-fragmentation equations. Particularly, in \cite{CY19} the authors improved previous results on the weakly non-linear model for interacting neuron dynamics. Their approach allows them to construct a steady solution to the non-linear equation based on an explicit smallness assumption on the connectivity parameter. The uniqueness of the stationary solution is then proved by a fixed point argument. The perturbation argument we used in the weakly non-linear setting is close to the ideas in \cite{CY19}. The main difference is that our fixed point argument is more involved and it requires the use of Harris's theorem and unlike in \cite{CY19} Doeblin's theorem does not work. Moreover, our argument requires additional moment estimates for the perturbation term. We carry out this by finding an appropriate Lyapunov functional in the non-linear case as well. This was not needed in \cite{CY19} as the authors could work with the steady solutions of the weakly non-linear equation explicitly.

	\paragraph{Novelty} The novelty in the present work is twofold. Firstly, to the best of our knowlegde, we give the most general spectral gap result (valid for arbitrary dimension $d\geq 1$ while relaxing the assumptions on the chemoattactant concentration $S$ and the tumbling frequency $T$) on the linear run and tumble equation. Particularly, our result is an improvement of the recent work in \cite{MW17}. Even though the result in \cite{MW17} is stated for $d \geq 1$, it is only valid under the assumption that the chemoattractant concentration $S$ is radially symmetric. Our result does not require this assumption to hold and it is valid for more general forms of tumbling frequency including the commonly used ones involving the ``sign function'' (see e.g. \eqref{eq:T-sign}, \eqref{eq:T-general}), in particular the fact that we study Lipschitz tumbling rates allows us to perform the later non-linear analysis and maybe helpful in dealing with the fully non-linear problem.
	
	Secondly, our results in the non-linear setting are all new. A non-local coupling \eqref{eq:weakly_nonlin} has not been considered in the literature before and there is not any explicit convergence result in the non-linear setting with any other type of non-linearity. We believe that our results on the weakly non-linear run and tumble equation are significant as they can be considered as an intermediate step towards studying the physically relevant case with Poisson coupling \eqref{eq:S-Poisson}.

	\subsection{Macroscopic models for chemotaxis}  \label{sec:macroscopic}
	
	Macroscopic models for chemotaxis are widely studied dating back to Patlak \cite{P53}, Keller and Segel \cite{KS70}.  Consequently, we describe briefly the relationship between kinetic and macroscopic models and the macroscopic models themselves. The motivation for this subsection is that the limiting aggregation-diffusion equation of the kinetic model that we study is an example of an more accurate macroscopic model for chemotaxis, the flux-limited Keller-Segel (FLKS) system.

	In \cite{KS70}, the authors study the aggregation behaviour of a population of a cell called \emph{D. discoideum} which performs \emph{amoeboid movement} by changing its shape to engulf bacteria or other substances like nutrients. The model describing this behaviour is referred to as the Patlak-Keller-Segel (PKS) system and given in the general form 
	\begin{align} \label{eq:KS1}
		\partial_t \rho &= \nabla \cdot \left( D_{\rho}\nabla \rho - \phi_S (\cdot) \rho \right), \\
		\partial_t S &= D_S \Delta S + g(\rho, S), \label{eq:KS2}
	\end{align} where $\rho:= \rho(t,x)$ is the cell density, $S:= S(t,x)$ is the chemoattractant concentration for $t \geq 0$, $x \in \R^2$ and  $D_{\rho} >0$, $D_S >0$ are the diffusivity of the cells and the chemoattractant respectively. The classical PKS system studied in \cite{KS70} corresponds to the case $\phi_S (\nabla S)= \chi \nabla S$, where the constant $\chi$ is the chemotactic sensitivity. In \eqref{eq:KS2}, $g$ is a function describing the production, degradation and consumption of the chemoattactant by the cells.  Typically, the cells move towards the regions with higher nutrient density. After consuming all the nutrient, they disperse uniformly over the space and, eventually, they start to aggregate and form clusters. The aggregation describes the instability observed in the population level and it is analogous with many physical problems. The significance of the PKS model comes from the fact that it allows to investigate aggregation behaviour of the population. There are numerous results linking the mesoscopic and microscopic descriptions of chemotaxis to the macroscopic one, e.g. \cite{A80, S00,DN94, OH00, OH02, PVW20, X15, JV13, STY14} and references therein.
	
	For the particular model we study here we can look at a limit to a particular macroscopic model following \cite{OH00, OH02}. We call $\tau$ and $\xi$ the scaled time and space variables respectively. For a small $\varepsilon >0$, $\tau$ and $\xi$ are given by 
	\begin{align*}
		\tau = \varepsilon^2 t,  \quad \xi = \varepsilon x.
	\end{align*} 
	We define $\lambda^\varepsilon(v, \xi):= \lambda (v \cdot \nabla_x M (\xi) )$ and assume that as $\varepsilon  \to 0$,
	\begin{align*} 
		\lambda^\varepsilon(v, \xi) \approx 1 - \varepsilon \chi \psi \left( v\cdot(\nabla_\xi M)(\xi) \right).
	\end{align*} This is consistent with the form of $\lambda$ we assumed in this paper (see hypothesis \textbf{(H1)}).
	
	We call $F(\tau, v, \xi )$ the density distribution of microorganisms with the scaled variables and we have the following equation for $F$,
	\begin{align*} 
		\varepsilon^2 \partial_\tau F + \varepsilon v \cdot \nabla_\xi F = \int_{\mathcal{V}} \lambda^\varepsilon(v',\xi)F(\tau, v',\xi)\d v' - \lambda^\varepsilon(v, \xi) F.  
	\end{align*}
	We define the new spatial density,
	\begin{align*}
		\rho(\tau, \xi) := \int F(\tau, v, \xi)\d v.
	\end{align*}
	We then have, by formal computations in the limit as $\varepsilon \to 0$,
	\begin{align} \label{eq:aggr-diff}
		\partial_\tau \rho = \nabla_\xi \cdot \left( \nabla_\xi \rho - \phi_S(\xi)\rho \right), 
	\end{align} where the macroscopic chemotactic velocity $\phi_S$ is given by
	\begin{align*}
		\phi_S  = \chi \int_{\mathcal{V}} v' \psi(v' \cdot (\nabla_\xi M)(\xi))\d v'.
	\end{align*}
	
	This model is slightly different and is an example of a flux-limited Keller-Segel equation which appears to be a more accurate description of chemotaxis by taking into account the saturation of the cell velocity. This model was introduced and studied in \cite{HP09, DS05,  CKWW12} amongst other work.

	\section{Harris's Theorem} \label{sec:Harris}
	
	In this section, we give the statement of Harris's theorem based on \cite{HM11, H16, CM21}. Harris's theorem is a probabilistic method which gives simple conditions on ergodic (long-time) behaviour of Markov processes. The original idea dates back to Doeblin \cite{D40} where he showed \emph{mixing }of a Markov chain whose transition probabilities possess a uniform lower bound. We refer to this condition as \emph{Doeblin condition} and explain it below. The mixing of a Markov chain refers to the time until the Markov chain reaches its stationary state distribution. In \cite{H56}, Harris studied the necessary conditions for a Markov process to admit a unique stationary state or an invariant measure. Later in \cite{ DMT95, MT93, MT09}, this result was used for the first time to obtain quantitative convergence rates based on verifying a \emph{minorisation condition} and a \emph{geometric drift} or \emph{Foster-Lyapunov condition}. In \cite{HM11}, the authors provided a simplified proof of Harris's theorem by using appropriate Kantorovich distances and recently in \cite{CM21}, the authors provided an alternative proof by using semigroup arguments. We state the theorems below in the spirit of \cite{H16, CM21}. 
	
	We consider a Polish space $\Omega$ and denote $\Sigma$ as the $\sigma-$algebra of Borel subsets of $\Omega$. Then $(\Omega, \Sigma)$ is a measurable space; and, endowed with any probability measure, $\Omega$ is a Lebesgue space. We denote the space of probability measures by $\mathcal{P}(\Omega)$.
	
	\,
	
	A natural way to construct a Markov process is via a \emph{transition probability function}.
	\begin{dfn}  \label{defn:trans_prob_func}
		A linear, measurable function $\mathcal{M}(x,A)$ is a transition probability function on $(\Omega, \Sigma)$ if for every $x$, $\mathcal{M}(x, \cdot)$ is a probability measure on $(\Omega, \Sigma)$  and $\mathcal{M}(\cdot, A)$ is a measurable function for every $A \in \Sigma$.
	\end{dfn}
	A Markov operator $M$ and its adjoint $M^*$ can be defined by means of a transition probability function $\mathcal{M}$ in the following way:
	\begin{align*}
		(M\mu) (A) = \int_{\Omega} \mathcal{M}(x,A) |\mu| \d x, \qquad (M^* \phi ) (x) = \int_{\Omega} \phi(y) \mathcal{M}(x, \d y),
	\end{align*} where $\phi : \Omega \mapsto [0, +\infty)$ is a bounded measurable function.
	\begin{dfn}
		A family of Markov operators $(M_{t})_{t \geq 0}$ is called a \emph{Markov semigroup} if it satisfies
		the following
		\begin{enumerate}
			\item [\textit{i.}] $M_0 =\mbox{Id}$ or equivalently $\mathcal{M}_0(x, \cdot) = \delta_x$ for all $x \in \Omega$.
			\item[\textit{ii}.] The semigroup property: $M_{t+s} = M_t M_s$ for $t,s \geq 0$.
			\item [\textit{iii}.] For every $\mu\in L^1$, $t \mapsto M_t\mu$ is continuous.
		\end{enumerate}
	\end{dfn}
	We also note that Markov semigroups have 
	\begin{enumerate}
		\item [\textit{i.}] Positivity property: $M_t \geq 0 $ for any $t \geq 0$
		\item[\textit{ii.}] Conservativity property:  $\int | M_t \mu| (\d x)= \int |\mu | (\d x)$ for any finite measure $\mu $. 
		
	\end{enumerate}
	
	In our setting $M_t \mu$ will be the solution of the partial differential equation $f$  at time $t$ with an initial data $\mu$ which is a probability measure. Moreover for every $ t \geq 0$, if $M_t \mu = \mu$, then the probability measure $\mu$ is called an \emph{invariant measure} of $(M_t)_{t \geq 0}$ or equivalently a \emph{steady state solution} of $f$. 
	
	\setlength{\leftmargini}{15pt}
	
	\begin{thm}[Doeblin's Theorem] \label{thm:Doeblin}
		Suppose that we have a Markov semigroup $(M_t)_{t \geq 0}$
		which satisfies 
		
		\begin{itemize}
			\item [] \textbf{Doeblin's condition:} There exists a time $T>0$, a probability distribution $\nu$ and a constant $\alpha \in (0,1)$ such that for any $z_0$ in the domain
			\begin{align*} \label{eqn:Doeblin_condition}
				M_{T} \delta_{z_0} \geq \alpha \nu.
			\end{align*}
		\end{itemize} Then for any two finite measures $\mu_1$ and $\mu_2$ and any integer
		$n \geq 0$ we have that
		\begin{equation*} \label{eqn:Doeblin1}
			\left\| M^{n}_{T} (\mu_1 -  \mu_2)\right\|_{\mathrm{TV}} \leq (1-\alpha) ^n\left\| \mu_1 - \mu_2 \right\|_{\mathrm{TV}}.
		\end{equation*}
		As a consequence, the semigroup has a unique invariant probability measure $\mu_{\infty}$, and for all probability measures $\mu$
		\begin{equation*} \label{eqn:Doeblin2}
			\left\| M_{t} ( \mu - \mu_{\infty}) \right\|_{\mathrm{TV}}  \leq C e^{-\sigma t} \left\| \mu - \mu_{\infty} \right\|_{\mathrm{TV}}, \quad \text{ for all }t \geq 0,
		\end{equation*} where $C := 1/ (1-\alpha) > 1$ and $\sigma := -\log(1-\alpha)/T >0$.
	\end{thm}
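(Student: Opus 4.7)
The plan is to extract the exponential contraction entirely from the one-step minorisation given by Doeblin's condition, then iterate it and interpolate to continuous time. The key observation is that Doeblin's condition is exactly the ingredient needed to couple two trajectories with positive probability $\alpha$ at each time interval of length $T$, which translates into strict contraction in total variation.

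First I would establish the one-step contraction: given two probability measures $\mu_1,\mu_2\in\mathcal{P}(\Omega)$, use the Hahn--Jordan decomposition to write $\mu_1-\mu_2 = \eta(p - q)$, where $\eta = \tfrac12\|\mu_1-\mu_2\|_{\mathrm{TV}}$ and $p,q$ are mutually singular probability measures. Integrating Doeblin's pointwise bound against $p$ via $(M_T p)(A) = \int \mathcal{M}_T(x,A)\,p(\mathrm{d}x) \geq \alpha \nu(A)$ (and similarly for $q$) yields the decompositions $M_T p = \alpha \nu + (1-\alpha)\tilde p$ and $M_T q = \alpha \nu + (1-\alpha)\tilde q$ with $\tilde p, \tilde q$ again probability measures. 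The minorising mass $\alpha\nu$ cancels in the subtraction, leaving
\[ M_T(\mu_1-\mu_2) = \eta(1-\alpha)(\tilde p - \tilde q), \]
whose total variation is at most $2\eta(1-\alpha) = (1-\alpha)\|\mu_1-\mu_2\|_{\mathrm{TV}}$. Iterating the semigroup property $n$ times gives the stated discrete bound.

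Next I would build the invariant measure as a Cauchy limit: for any initial $\mu$, the sequence $(M_T^n\mu)_n$ is Cauchy in total variation by the one-step contraction, and its limit $\mu_\infty$ is $M_T$-invariant by continuity of $M_T$ in $L^1$. Uniqueness among $M_T$-fixed probability measures is immediate from the contraction. To lift this to full semigroup invariance, observe that for any $s\geq 0$ one has $M_T(M_s\mu_\infty) = M_s(M_T\mu_\infty) = M_s\mu_\infty$, so $M_s\mu_\infty$ is also $M_T$-invariant and therefore equals $\mu_\infty$.

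Finally, to convert the discrete bound into exponential decay, I would write $t = nT + r$ with $0 \leq r < T$ and use that every Markov operator is a contraction on zero-mass signed measures in total variation (dual to $\|M^*\phi\|_\infty \leq \|\phi\|_\infty$). This yields $\|M_t(\mu-\mu_\infty)\|_{\mathrm{TV}} \leq (1-\alpha)^n \|\mu-\mu_\infty\|_{\mathrm{TV}}$, and bounding $n \geq t/T - 1$ produces the announced constants $C = 1/(1-\alpha)$ and $\sigma = -\log(1-\alpha)/T$. The proof is conceptually straightforward; the only step worth double-checking is the integrated minorisation $M_T p \geq \alpha\nu$ for a general probability measure $p$ rather than just for Dirac masses, but this follows directly from Fubini applied to the pointwise Doeblin bound, so no genuine obstacle is expected --- the whole argument is a clean packaging of the coupling idea underlying Doeblin's condition.
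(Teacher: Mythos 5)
Your proof is correct, but note that the paper does not actually prove Theorem \ref{thm:Doeblin}: it only states it and refers to \cite{HM11, H16} for the proof. The argument you give is precisely the classical one found in those references --- decompose $\mu_1-\mu_2$ into mutually singular equal-mass parts, peel off the common minorising mass $\alpha\nu$ after one application of $M_T$ to get the factor $(1-\alpha)$, iterate, obtain $\mu_\infty$ as a Cauchy limit, upgrade $M_T$-invariance to invariance under the whole semigroup by commutativity, and interpolate to continuous time using that Markov operators are $1$-Lipschitz in total variation on zero-mass signed measures. All steps, including the integrated minorisation $M_T p \ge \alpha\nu$ for general $p$, are sound, and your constants match the stated $C=1/(1-\alpha)$ and $\sigma=-\log(1-\alpha)/T$. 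One small point worth flagging: the theorem as printed says ``for any two finite measures,'' but the contraction only holds when $\mu_1(\Omega)=\mu_2(\Omega)$ (otherwise the minorising masses do not cancel); your implicit restriction to equal-mass measures via the decomposition $\mu_1-\mu_2=\eta(p-q)$ with $p,q$ probability measures is the correct reading of the statement.
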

	Doeblin's condition sometimes referred as the \emph{strong positivity condition} or \emph{uniform minorisation condition}. It means for a Markov process that the probability of transitioning from any initial state to any other state is positive. 
	Doeblin's theorem gives a unique stationary state for a Markov process and exponential convergence to this state once Doeblin's condition is satisfied. However, proving such a uniform positivity is often difficult. Especially when the state space of the Markov process is unbounded. Harris's theorem is an extension of Doeblin's theorem to these cases. Instead of a uniform minorisation condition, we show that Doeblin's condition is satisfied only in a given region and verify that the process will visit this region often enough. For the latter part we need to find an appropriate Lyapunov functional, i.e., verify the Foster-Lyapunov condition.
	
	\begin{thm}[Harris's Theorem] \label{thm:Harris} Suppose that we have a Markov semigroup $(M_t)_{t \geq 0}$ satisfying the following two conditions
		\begin{itemize}
			\item [] \textbf{Foster-Lyapunov condition:}
			There exists  $\lambda >0$, $K\geq 0$, some time $T> 0$ and a measurable function $\phi $ such that for all $z$ in the domain
			\begin{align} \label{con:Foster-Lyapunov}
				(M_T^* \phi) (z) \leq \lambda \phi (z) + K.
			\end{align}
			\item [] \textbf{Minorisation condition:}  There exists a time $T>0$, a probability distribution $\nu$ and a constant $\alpha \in (0,1)$ such that for any $z_0 \in \mathcal{C}$ ,
			\begin{align} \label{con:minorisation}
				M_{T} \delta_{z_0} \geq \alpha \nu,
			\end{align} where $\mathcal{C} : = \{z : \phi (z) \leq R\}$, for some  $R> 2K/ (1-\alpha)$.
		\end{itemize} Then there exist $\beta>0$ and $\bar{\alpha} \in (0,1)$ such that 
		\begin{align*}
			\left\| M^{n}_{T} (\mu_1 -  \mu_2)\right\|_{\phi, \beta} \leq \bar{\alpha} \left\| \mu_1 - \mu_2 \right\|_{\phi, \beta} 
		\end{align*} for all nonnegative measures $\int \mu_1 = \int \mu_2$ where the norm $\| \cdot \|_{\phi, \beta} $ is defined by
		\begin{align*}
			\|\mu\|_{\phi, \beta}  := \int (1 + \beta \phi (z)) |\mu| \d z.
		\end{align*}
		Moreover, the semigroup has a unique invariant probability measure $\mu_{\infty}$ and there exist  $C>1$, $\sigma >0$ (depending on $T, \alpha, \lambda , K, R$ and $\beta$) such that
		\begin{equation*}
			\left\| M_{t} ( \mu - \mu_{\infty}) \right\|_{\phi, \beta}   \leq C e^{-\sigma t} \left\| \mu - \mu_{\infty}\right\|_{\phi, \beta}, \quad \text{ for all }t \geq 0,
		\end{equation*}
	\end{thm}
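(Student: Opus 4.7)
The plan is to follow the approach of Hairer--Mattingly \cite{HM11}: prove directly that $M_T$ is a strict contraction of the weighted norm $\|\cdot\|_{\phi,\beta}$ for suitable $\beta>0$ and $\bar\alpha\in(0,1)$, and then deduce existence of the unique invariant measure $\mu_\infty$ and the exponential decay in continuous time as standard consequences. By linearity and a coupling argument, the contraction
\[
\|M_T(\mu_1-\mu_2)\|_{\phi,\beta} \leq \bar\alpha \|\mu_1-\mu_2\|_{\phi,\beta}
\]
reduces to the pointwise estimate
\[
\|M_T(\delta_x - \delta_y)\|_{\phi,\beta} \leq \bar\alpha \bigl(2 + \beta\phi(x) + \beta\phi(y)\bigr),
\]
valid for all $x,y\in\Omega$, which one then integrates against an optimal coupling of $\mu_1$ and $\mu_2$.

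The argument splits into two regimes according to whether $(x,y)\in \mathcal{C}\times\mathcal{C}$ or not. In the \emph{minorisation regime} $x,y\in\mathcal{C}$, condition \eqref{con:minorisation} allows the decomposition $M_T\delta_z = \alpha\nu + (1-\alpha)Q_z$ with $Q_z$ a probability measure, and the common component $\alpha\nu$ cancels, giving $\|M_T(\delta_x-\delta_y)\|_{\mathrm{TV}} \leq 2(1-\alpha)$. Combining this with the $\phi$-moment bound supplied by Foster--Lyapunov \eqref{con:Foster-Lyapunov} yields an estimate of the form $2(1-\alpha) + \beta\lambda(\phi(x)+\phi(y)) + 2\beta K$, and since $\phi(x),\phi(y)\leq R$, this can be dominated by $\bar\alpha(2+\beta\phi(x)+\beta\phi(y))$ provided $\beta$ is chosen \emph{small enough} that the additive penalty $2\beta K$ is absorbed into the gap $2\alpha$.

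In the \emph{Lyapunov regime} at least one of $\phi(x),\phi(y)$ exceeds $R$, so in particular $\phi(x)+\phi(y)>R$. Combining the trivial bound $\|M_T(\delta_x-\delta_y)\|_{\mathrm{TV}}\leq 2$ with Foster--Lyapunov gives
\[
\|M_T(\delta_x-\delta_y)\|_{\phi,\beta} \leq 2 + \beta\lambda(\phi(x)+\phi(y)) + 2\beta K,
\]
which must be bounded by $\bar\alpha(2+\beta(\phi(x)+\phi(y)))$. This is achieved provided $\beta$ is taken \emph{large enough} that the $\beta$-weighted $\phi$-part dominates the constant $2$; the quantitative requirement is essentially $R>2K/(\bar\alpha-\lambda)$, which is where the hypothesis $R > 2K/(1-\alpha)$ is used, after choosing $\bar\alpha$ appropriately in $(\lambda,1)$.

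The main technical obstacle is precisely this simultaneous choice of $\beta$: small for the minorisation regime, large for the Lyapunov regime. The condition $R > 2K/(1-\alpha)$ opens a nonempty window of admissible $\beta$ in which both inequalities hold with a common $\bar\alpha<1$; this delicate balance is the heart of the proof. Once the one-step contraction is established, iteration gives $\|M_T^n(\mu_1-\mu_2)\|_{\phi,\beta}\leq\bar\alpha^n\|\mu_1-\mu_2\|_{\phi,\beta}$. Applying the Banach fixed point theorem to $M_T$ on the complete metric space of probability measures with finite $\phi$-moment produces a unique $M_T$-invariant probability measure $\mu_\infty$, which by the semigroup property is invariant under $M_t$ for every $t\geq 0$; and the continuous-time exponential decay follows by writing $t = nT + s$ with $0\leq s<T$ and using the uniform bound $\|M_s\mu\|_{\phi,\beta}\leq C\|\mu\|_{\phi,\beta}$ on the finite interval $[0,T]$, yielding $C>1$ at the cost of a constant prefactor.
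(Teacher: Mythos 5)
The paper does not prove this theorem itself; it defers to \cite{HM11, H16}. Your proposal is a correct outline of exactly the Hairer--Mattingly argument cited there, and it is consistent with the explicit constants recorded in the paper's remark following the statement ($\beta=\alpha_0/K$ with $\alpha_0\in(0,\alpha)$, and $\bar\alpha=\max\{1-\alpha-\alpha_0,\,(2+R\beta\lambda_0)/(2+R\beta)\}$ with $\lambda_0\in[\lambda+2K/R,1)$). One small point of exposition: the tension you describe is slightly overstated. In the Lyapunov regime, bounding $\beta\lambda(\phi(x)+\phi(y))+2\beta K\leq\beta\lambda_0(\phi(x)+\phi(y))$ with $\lambda_0=\lambda+2K/R<1$ gives the contraction factor $(2+\beta\lambda_0R)/(2+\beta R)$, which is strictly less than $1$ for \emph{every} $\beta>0$ (it merely degrades to $1$ as $\beta\to0$); so the only binding constraint on $\beta$ is the upper bound $\beta\leq\alpha_0/K$ from the minorisation regime, and the hypothesis $R>2K/(1-\lambda)$ (note the paper writes $1-\alpha$ here, apparently a typo for $1-\lambda$) is what makes $\lambda_0<1$ and hence the Lyapunov-regime factor nontrivial. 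Your reduction to Dirac masses via the Kantorovich duality for the metric $(2+\beta\phi(x)+\beta\phi(y))\mathds{1}_{x\neq y}$, and the continuous-time extension via $t=nT+s$ with a uniform bound on $[0,T]$, are both the standard steps and are fine.
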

	
	\begin{remark}
		The constants in Theorem \ref{thm:Harris} can be calculated explicitly. If we set $\lambda_0 \in [\lambda + 2K/R, 1)$ for any $\alpha_0 \in (0, \alpha )$ we can choose $\beta = \alpha_0/K$ and $\bar{\alpha} = \max \{1-\alpha - \alpha_0, (2+R\beta \lambda_0)/(2+ R\beta )\}$. Then we have $C:= 1/ \bar{\alpha}$ and $\sigma = - \log \bar{\alpha} / T$. 
	\end{remark} For the proofs of Theorem \ref{thm:Doeblin} and Theorem \ref{thm:Harris} we refer to \cite{HM11, H16} and references therein.
	
	In the following two sections we show how the Foster-Lyapunov condition and the minorisation condition are verified for Equation \eqref{eq:rt-linear}. At the end of the section we give the proof of Theorem \ref{thm1}.
	
	We use the notations $z := (x,v)$ and $\int  \d z := \int_{\R^d}\int_{\mathcal{V}} \d x \d v$ for the rest of the paper whenever convenient.
	
	\subsection{Foster-Lyapunov condition} \label{sec:Foster-Lyapunov}
	In this section, we verify the Foster-Lyapunov condition \eqref{con:Foster-Lyapunov} for Equation \eqref{eq:rt-linear}. In order to look at Lyapunov functions let us fix some notation. We remark that by \emph{Lyapunov functions} we do not refer to scalar functions which are used for stability results in ODE theory. By a Lyapunov function in the sense of Harris's theorem, we want some function $\phi(z)$ where $\phi(z) \rightarrow \infty$ as $|z| \rightarrow \infty$ and the existence of some $t>0$, $C>0$ and $\alpha \in (0,1)$ such that 
	\begin{align} \label{ineq:lyapunov1}
		\int \phi(z) f(t,z) \mathrm{d}z \leq \alpha \int \phi(z)  f_0(z) \d z + C \int f_0(z) \d z, 
	\end{align} for any initial data $f_0(z) \in \mathcal{P}(\R^d\times \mathcal{V})$. 
	
	For $f$ satisfying an equation
	\begin{align*}
		\partial_t f = \mathcal{L} f,
	\end{align*}we can prove \eqref{ineq:lyapunov1} by showing that
	\begin{align} \label{ineq:lyapunov2}
		\mathcal{L}^* \phi \leq - \gamma \phi + D, 
	\end{align} for some positive constants $\gamma, D$. One can verify the fact that \eqref{ineq:lyapunov2} implies \eqref{ineq:lyapunov1} with $\alpha= e^{-\gamma t} $ and $C = D/ \gamma$ by an easy computation (see, e.g., Remark 1 in \cite{Y22}).

	In \eqref{ineq:lyapunov2}, $\mathcal{L}^*$ is the formal adjoint of $\mathcal{L}$. In our case
	\begin{align}
		\label{eq:L_generator}
		\mathcal{L}f = - v\cdot \nabla_x f + \int_{\mathcal{V}} \lambda(v'\cdot \nabla_x M) f(x,v')\d v' - \lambda(v \cdot \nabla_x M) f(x,v). 
	\end{align} Therefore,
	\begin{align}
		\label{eq:L_adjoint}
		\mathcal{L}^*\phi = v\cdot \nabla_x \phi + \lambda(v\cdot \nabla_x M) \left( \int_{\mathcal{V}}\phi(x,v') \d v' - \phi(x,v)\right). 
	\end{align}
	
	\begin{lem}[Foster-Lyapunov condition for Equation \eqref{eq:rt-linear}] \label{lem:Foster-Lyapunov}
		Suppose that hypotheses \textbf{\em (H2)--(H4)} hold. Suppose also that $\|\psi \|_\infty \leq 1$. Then there exist constants $\gamma >0$ and $\beta >0$ such that
		\begin{align*} 
			\phi(x,v) =  \left( 1 -\gamma v\cdot \nabla_x M(x) -\beta \gamma \psi(v \cdot \nabla_x M(x)) v \cdot \nabla_x M(x)\right)e^{-\gamma M(x)}, 
		\end{align*} is a function for which the semigroup generated by $\mathcal{L}$ in \eqref{eq:L_generator} satisfies the Foster-Lyapunov condition \eqref{con:Foster-Lyapunov} with  $\beta = \chi/(1+\chi)$ and
		\begin{align*}
			\gamma \leq \min \left\{\frac{\tilde{\lambda}\chi(1-\chi)\xi}{8(1+\chi)} , \frac{1+\chi}{2(2+\chi)V_0 \|\nabla_x M\|_\infty} \right\},
		\end{align*} with \begin{align*} \xi:=
			\begin{cases}
				m_*^{k-2}, \quad &\mbox{if } k<2,\\ 
				1, \quad &\mbox{if } k=2, \\
				\|\nabla_x M\|_\infty^{k-2}, \quad &\mbox{if } k>2,
			\end{cases}
		\end{align*} where $m_*>0$ is found in \textbf{\em (H2)}.
	\end{lem}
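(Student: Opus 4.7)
My plan is to compute $\mathcal{L}^*\phi$ directly from \eqref{eq:L_adjoint} with $\phi = \Psi\, e^{-\gamma M}$, $\Psi(x,v) := 1 - \gamma m - \beta\gamma\psi(m)m$ and $m := v\cdot\nabla_x M$, and then show that a single coercive negative contribution coming from Hypothesis \ref{hyp:int_bound} dominates every positive error. The transport term splits as $v\cdot\nabla_x\phi = e^{-\gamma M}\bigl[-\gamma m\Psi + v\cdot\nabla_x\Psi\bigr]$; since $v\cdot\nabla_x\Psi$ differentiates only through $\nabla_x M$, it produces exclusively $\hess(M)$ factors and is therefore a lower-order error by Hypothesis \ref{hyp:HessM}. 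Because $K\equiv 1$ and $\int_{\mathcal{V}} v'\,\mathrm{d}v' = 0$ ($\mathcal{V}$ is a centered ball), the collision difference simplifies cleanly to $e^{-\gamma M}\bigl[\gamma m + \beta\gamma\psi(m)m - \beta\gamma\int_{\mathcal{V}}\psi(m')m'\,\mathrm{d}v'\bigr]$.

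\textbf{Key cancellation and the choice of $\beta$.} Multiplying the collision bracket by $\lambda(m)=1-\chi\psi(m)$ and adding the transport contribution, the leading linear-in-$\gamma$ pieces $-\gamma m$ and $+\gamma m$ annihilate each other exactly. What remains at order $\gamma$ is $(\beta-\chi)\gamma\psi(m)m - \chi\beta\gamma\psi(m)^2 m - \beta\gamma(1-\chi\psi(m))\int_{\mathcal{V}}\psi(m')m'\,\mathrm{d}v'$, modulo $O(\gamma^2|\nabla_x M|^2)$ contributions and Hessian corrections. Hypothesis \ref{hyp:int_bound} together with $1-\chi\psi(m)\geq 1-\chi$ produces the coercive bound $-\beta\gamma(1-\chi\psi(m))\int_{\mathcal{V}}\psi(m')m'\,\mathrm{d}v' \leq -\beta\gamma(1-\chi)\tilde{\lambda}|\nabla_x M|^k$. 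I then pick $\beta=\chi/(1+\chi)$ so that $(\beta-\chi)\psi(m)m - \chi\beta\psi(m)^2 m = -\tfrac{\chi^2}{1+\chi}\psi(m)m(1+\psi(m))\leq 0$, using that $\psi(m)m\geq 0$ (because $\psi$ is odd and increasing) and $\psi(m)\geq -1$.

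\textbf{Closing the estimate; main obstacle.} What remains is to absorb the positive errors $\gamma^2 m^2(1+\beta\psi(m))$ and the Hessian corrections into the coercive term. Comparing $|\nabla_x M|^2$ with $|\nabla_x M|^k$ through the quantity $\xi$ (whose three cases $k<2$, $k=2$, $k>2$ reflect exactly how this comparison is achieved on the relevant range of $|\nabla_x M|$) forces the first smallness constraint $\gamma \leq \tilde{\lambda}\chi(1-\chi)\xi/[8(1+\chi)]$, after which the quadratic error is at most half of the coercive contribution. The Hessian terms are $o(1)$ as $|x|\to\infty$ by Hypothesis \ref{hyp:HessM}, so outside a sufficiently large ball they too can be absorbed; inside that ball every quantity is bounded and is swept into the additive constant on the right-hand side of the Foster--Lyapunov inequality. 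The second constraint $\gamma\leq (1+\chi)/[2(2+\chi)V_0\|\nabla_x M\|_\infty]$ guarantees $\Psi\geq 1/2$, so $\phi$ is positive and comparable to $e^{-\gamma M}$, which converts the pointwise bound $\mathcal{L}^*\phi \leq -c\,|\nabla_x M|^k e^{-\gamma M} + D$ into the required $\mathcal{L}^*\phi \leq -c'\phi + K$ using $|\nabla_x M|\geq m_*$ outside the ball of radius $R$. The main technical difficulty is the careful sign- and size-bookkeeping of numerous cross terms; the non-obvious design point is that the apparently redundant $-\beta\gamma\psi(m)m$ inside $\Psi$ plays no role in the exact linear cancellation, but is precisely what, upon averaging, generates the $\int_{\mathcal{V}}\psi(m')m'\,\mathrm{d}v'$ term---the unique source of confinement at infinity.
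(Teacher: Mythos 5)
Your proposal is correct and follows essentially the same route as the paper: the same Lyapunov function, the same exact cancellation of the $\pm\gamma\, v\cdot\nabla_x M$ terms between transport and collision parts, the same choice $\beta=\chi/(1+\chi)$ to kill the remaining first-order term, the coercive term from Hypothesis \ref{hyp:int_bound}, and the same absorption of the $O(\gamma^2|\nabla_x M|^2)$ and Hessian errors via the two smallness constraints on $\gamma$ together with Hypotheses \ref{hyp:M}--\ref{hyp:HessM}. The only (harmless) difference is that you compute $\mathcal{L}^*\phi$ on the product $\Psi e^{-\gamma M}$ directly and keep the slightly sharper form $-\tfrac{\chi^2}{1+\chi}\psi(m)m(1+\psi(m))\leq 0$ where the paper upper-bounds this remainder by the exactly vanishing quantity $(\beta(1+\chi)-\chi)\gamma\psi(m)m$.
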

	
	\begin{proof}
		We begin by a brief motivation of the form of $\phi$ in the proof. It is structurally similar to an estimate in Lemma 2.2 in \cite{MW17}. As the confining terms are bounded, we expect that we need to look for a Foster-Lyapunov functional which has exponential tails, by analogy with parabolic reaction diffusion equations with bounded drift terms. We can also guess this form by looking at the previous results on similar equations including \cite{MW17}. We choose a function of $M$ which will have this behaviour, $e^{-\gamma M}$, and seek a Foster-Lyapunov functional which is closely related to this. We derive the precise form of $\phi$ by repeatedly differentiating $\int f(t,z)e^{-\gamma M(x)}\d z$ along the flow of the equation until we find a term which doesn't change sign. We then create our Foster-Lyapunov function from a combination of $e^{-\gamma M(x)}$ and the key terms appearing in the derivatives of this moment along the flow of the equation.
		
		First we compute the action of $\mathcal{L}^*$ on the different elements.
		\begin{align*}
			\mathcal{L}^* \left( e^{-\gamma M(x)} \right) = -\gamma v \cdot \nabla_xM(x) e^{-\gamma M(x)}.
		\end{align*}
		Furthermore,
		\begin{align*}
			\mathcal{L}^* \left( v\cdot \nabla_x M(x) e^{-\gamma M(x)}\right) &= \left(v^T \hess (M)(x)v-\gamma (v\cdot \nabla_x M(x))^2  \right)e^{-\gamma M(x)}
			\\&- \left( (1-\chi \psi(v \cdot \nabla_x M(x)))v \cdot \nabla_x M(x)\right)e^{-\gamma M(x)}.
		\end{align*} Lastly,
		\begin{align*} 
			\mathcal{L}^* &\left(\psi(v \cdot \nabla_x M(x)) v \cdot \nabla_x M(x) e^{-\gamma M(x)} \right) = \left( \psi'(v \cdot \nabla_x M(x))v^T \hess (M(x)) v v \cdot \nabla_x M(x) \right) e^{-\gamma M(x)}\\
			&+\left(\psi(v \cdot \nabla_x M(x)) v^T \hess (M)(x) v - \gamma \psi(v \cdot \nabla_x M(x)) (v \cdot \nabla_x M(x))^2 \right)e^{-\gamma M(x)} \\
			&+ (1-\chi \psi (v \cdot \nabla_x M(x))) \left( \int_{\mathcal{V}} \psi(v' \cdot \nabla_x M(x)) v' \cdot \nabla_x M(x) \d v' - \psi(v \cdot \nabla_xM(x))v\cdot \nabla_x M(x)\right) e^{-\gamma M(x)}.
		\end{align*}
		Putting everything together gives,
		\begin{align} \label{ineqL*}
			\begin{split}
			\mathcal{L}^*\Big((1&-\gamma v \cdot \nabla_x M(x) -\beta \gamma \psi(v \cdot \nabla_x M(x)) v\cdot \nabla_x M(x))e^{-\gamma M(x)}\Big) 
			\\ \leq  &- \left( \beta\gamma(1-\chi )) \int_{\mathcal{V}} \psi(v'\cdot \nabla_x M(x)) v' \cdot \nabla_x M(x) \d v' \right ) e^{-\gamma M(x)} \\
			& +  \left( \beta \gamma(1+\chi) - \gamma \chi  \right )  \psi(v \cdot \nabla_x M(x)) v\cdot \nabla_x M(x)  e^{-\gamma M(x)}  \\
			& + \left(  \gamma^2(v\cdot \nabla_x M(x))^2 + \gamma^2 \beta \psi(v \cdot \nabla_x M(x))(v\cdot \nabla_x M(x))^2   \right ) e^{-\gamma M(x)}  \\
			& - \left ( \gamma +\beta \gamma \psi'(v \cdot \nabla_x M(x))v\cdot \nabla_x M(x) + \beta \gamma \psi(v \cdot \nabla_x M(x) ) \right)v^T\hess (M)(x)v e^{-\gamma M(x)}.
			\end{split}
		\end{align}
		We also have (for $\beta \leq 1$)
		\begin{multline*}
			-\gamma -\beta \gamma \psi'(v \cdot \nabla_x M(x))v\cdot \nabla_x M(x) - \beta \gamma \psi(v \cdot \nabla_x M(x)) 
			\\\leq \gamma + \beta \gamma \sup_{|z| \leq V_0 \|\nabla_x M\|_\infty} \left( \psi'(z)z + \psi(z)\right) \leq \gamma C_1(\psi, \|\nabla_x M\|_\infty),  
		\end{multline*}
		and
		\begin{align*}
			\gamma^2(v\cdot \nabla_x M(x))^2 + \gamma^2 \beta \psi(v \cdot \nabla_x M(x))(v\cdot \nabla_x M(x))^2 \leq 2\gamma^2 |\nabla_x M |^2.
		\end{align*}
		Combining these and choosing $\beta = \chi/(1+\chi)$, so that the second term on the right hand side of the inequality \eqref{ineqL*} vanishes, we have, 
		\begin{multline*}
			\mathcal{L}^*\Big((1-\gamma v \cdot \nabla_x M(x) -\beta \gamma \psi(v \cdot \nabla_x M(x))  v\cdot \nabla_x M(x))e^{-\gamma M(x)}\Big) 
			\\ \leq \left( -\beta \gamma\tilde{\lambda} (1-\chi) |\nabla_x M|^k 
			+ 2\gamma^2 |\nabla_x M|^2 + \gamma C_1 v^T \hess (M)(x) v \right)e^{-\gamma M(x)}.
		\end{multline*}
		Let us define
		\begin{align*} \xi:=
			\begin{cases}
				m_*^{k-2}, \quad &\mbox{if } k<2,\\ 
				1, \quad &\mbox{if } k=2, \\
				\|\nabla_x M\|_\infty^{k-2}, \quad &\mbox{if } k>2,
			\end{cases}
		\end{align*} where $m_*$ is coming from \textbf{(H2)}. Then, if we choose $\gamma$ so that the term with $\gamma^2$ will be controlled by the negative terms, and so that $\phi$ will be positive i.e. \[\gamma \leq \min \left\{\frac{\tilde{\lambda}\chi(1-\chi) \xi}{8(1+\chi)}, \frac{1 + \chi }{2 (2+\chi)V_0 \|\nabla_x M\|_\infty} \right\},\] then we have, at least for $x$ sufficiently large in the case $k<2$ that,
		\begin{multline*}
			\mathcal{L}^*\Big ((1-\gamma v \cdot \nabla_x M(x) -\beta \gamma \psi(v \cdot \nabla_x M(x))v\cdot \nabla_x M)e^{-\gamma M(x)}\Big) 
			\\ \leq \gamma \left(-\frac{3\tilde{\lambda}\chi(1-\chi) }{8(1+\chi)}|\nabla_x M|^k +C_1 V_0^2|\hess (M) (x)| \right)e^{-\gamma M(x)}.
		\end{multline*}
		Then by hypothesis \textbf{(H2)} there exist $R>0$ and $m_*>0$ such that when $|x|>R$ we have
		\begin{align} \label{R}
			|\nabla_x M| > m_*, \quad \mbox{and}  \quad |\hess (M) (x) | \leq \frac{\tilde{\lambda} \chi (1-\chi) m_*^k}{4C_1 (1+\chi) V_0^2}. 
		\end{align} So we have,
		\begin{align*}
			\mathcal{L}^*\left((1-\gamma v \cdot \nabla_x M(x) -\beta \gamma \psi(v \cdot \nabla_x M(x)) v  \cdot \nabla_x M(x))e^{-\gamma M(x)}\right) \\
			\leq A\mathds{1}_{\{|x|<R\}} - \frac{\gamma\tilde{\lambda}\chi(1-\chi)m_*^k}{8 (1+\chi)} e^{-\gamma M(x)},
		\end{align*}
		where 
		\begin{align}
			\label{A}
			A = \sup_{|x| \leq R} \left \{ \gamma C_1 V_0^2 |\mbox{Hess}(M)(x)|e^{-\gamma M(x)} \right \}. 
		\end{align}
		Since we can compare $e^{-\gamma M(x)}$ to $(1-\gamma v \cdot \nabla_x M(x)- \beta \gamma \psi(v \cdot \nabla_x M(x))v \cdot \nabla_x M(x))e^{-\gamma M(x)}$ by
		\begin{align*} 
			(1-\gamma v \cdot \nabla_x M(x)- \beta \gamma \psi(v \cdot \nabla_x M(x))v \cdot \nabla_x M(x))e^{-\gamma M(x)} & \leq \left(1+ \gamma V_0 \|\nabla_x M\|_\infty (1+\beta \|\psi\|_\infty) \right) e^{-\gamma M(x)}\\ & \leq \frac{3}{2}e^{-\gamma M(x)}, 
		\end{align*} if we write 
		\[ \phi(x,v) = (1-\gamma v \cdot \nabla_x M(x)- \beta \gamma \psi(v \cdot \nabla_x M(x))v \cdot \nabla_x M(x))e^{-\gamma M(x)}, \] then
		\begin{align}
			\label{ineq:L_*_phi}
			\mathcal{L}^*\phi \leq A -  \frac{\gamma \tilde{\lambda}\chi (1-\chi)m_*^k}{12 (1+\chi)} \phi =  -  \frac{\gamma \tilde{\lambda}\chi (1-\chi)m_*^k}{12 (1+\chi)} \left(A' - \phi \right), 
		\end{align} where
		\begin{align*}
			A' =\frac{12 C_1 V_0^2 (1+\chi)}{\tilde{\lambda}\chi(1-\chi)m_*^k}\sup_{|x| \leq R}\left \{ |\mbox{Hess}(M)(x)| e^{-\gamma M(x)} \right \}.
		\end{align*}
		Therefore
		\begin{align*}
			\int f (t,z)\phi (z) \d z\leq A' + \exp \left( -  \frac{\gamma \tilde{\lambda}\chi (1-\chi)m_*^k}{12 (1+\chi)} t \right) \left(\int f_0 (z)\phi(z) \d z - A' \right). 
		\end{align*}
		Thus we prove \eqref{ineq:lyapunov1} for $\alpha = \exp \left( -  \frac{\gamma \tilde{\lambda}\chi (1-\chi)m_*^k}{12 (1+\chi)} t \right) $ and $C = A'$. 
	\end{proof}

	\subsection{Minorisation condition} \label{sec:minorisation}
	In this section, we show the minorisation condition \eqref{con:minorisation} for Equation \eqref{eq:rt-linear}. We consider two semigroups $(\mathcal{T}_t)_{t \geq 0}$ and $(\mathcal{S}_t)_{t \geq 0}$. Let $(\mathcal{T}_t)_{t \geq 0}$, represents the \emph{transport} part, be associated to the equation
	\begin{align}
		\label{eq:D1}
		\p_t f + v \cdot \nabla_x f + \lambda (x,v) f=0,
	\end{align} which means that the solution of \eqref{eq:D1} can be written as for $t \geq 0$, for all $x\in \R^d$,
	\begin{align} \label{bdP}
			\mathcal{T}_t f_0(x,v)= 
				f_0(x-vt, v) 
	\end{align}
	Here since $f_t$ is a measure we understand the change of variables $x \mapsto x-vt$ by duality. 
	Let $(\mathcal{S}_t)_{t \geq 0}$ be associated to the equation
	\begin{align} \label{eq:D2}
		\p_t f + v \cdot \nabla_x f + \lambda (v, x) f = \int_{\mathcal{V}} \lambda ( x,v')  f(t,x, v') \d v'.
	\end{align}
	Then the solution of \eqref{eq:D2} is
	\begin{align*}
		f(t,x,v) = \mathcal{S}_t f_0(x,v) = e^{- \int_0^t \lambda(x-vs) \mathrm{d}s}\mathcal{T}_t f_0(x,v) + \int_{0}^{t}  e^{-\int_s^t \lambda(x-vr) \mathrm{d}r }\mathcal{T}_{t-s} (\mathcal{J}f (s, x,v) ) \d s ,
	\end{align*} where $\mathcal{J} f(t,x,v) := \int_{\mathcal{V}} \lambda (x,v') f(t,x, v') \d v'$ is the \emph{jump operator}. Remark that we have 
	\begin{align} \label{J_bd}
		\mathcal{J} f(t,x,v) = \int_{\mathcal{V}} \lambda (x,v') f(t,x, v') \d v' \geq (1-\chi)  \mathds{1}_{\{|v| \leq V_0\}}\int_{\mathcal{V}}  f(t,x, v') \d v'. 
	\end{align}
Since $\mathcal{S}_t f_0(x,v) \geq  e^{-(1-\chi)t}\mathcal{T}_t f_0(x,v)$ and $\mathcal{S}_t f_0(x,v)  \geq \int_{0}^{t}e^{-(1+\chi)(t-s)} \mathcal{T}_{t-s} (\mathcal{J}f (s, x,v) ) \d s$, we substitute the first of these inequalities into the second and then iteratively substitute the result into the second to get 
	\begin{align*}
		f(t,x,v)   = \mathcal{S}_t f_0(x,v) \geq   (1-\chi)^2 e^{-(1+\chi)t }  \int_{0}^{t} \int_{0}^{s}  \mathcal{T}_{t-s} \mathcal{J} \mathcal{T}_{s-r} \mathcal{J} \mathcal{T}_r f_0 (x,v) \d r \d s.
	\end{align*}
	\begin{lem}\label{lem:transport}
		Given any time $t_0>0$, for all $t\geq t_0$ it holds that 
		\begin{align*}
			\int_{\mathcal{V} } \mathcal{T}_t \left( \delta_{x_0}(x) \mathds{1}_{\{|v_0| \leq V_0\}}(v) \right) \d v \geq   e^{-(1+\chi)t} \frac{1}{t^d|B(V_0)|} \mathds{1}_{\{|x-x_0| \leq V_0t\}}  \quad \text{for any } x_0 , v_0>0.
		\end{align*}
	\end{lem}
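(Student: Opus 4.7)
The plan is to exploit the explicit representation formula \eqref{bdP} for $\mathcal{T}_t$. Applied to the initial datum $f_0(y,v)=\delta_{x_0}(y)\,\mathds{1}_{\mathcal{V}}(v)$, that formula yields
\begin{equation*}
\mathcal{T}_t f_0(x,v) \;=\; \delta_{x_0}(x-vt)\,\mathds{1}_{\mathcal{V}}(v)\,\exp\!\left(-\int_0^{t} \lambda(x-vs,v)\,\d s\right).
\end{equation*}
Thus, the argument reduces to two independent pieces: a pointwise lower bound on the exponential damping factor, and the evaluation of the $v$-integral of the transported spatial delta.

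For the damping factor, I would use Hypothesis \ref{hyp:lambda}: since $\lambda(x,v) = 1 - \chi\psi(v\cdot\nabla_x M(x))$ and $\|\psi\|_\infty \le 1$, one has $\lambda(x,v)\le 1+\chi$ uniformly in $(x,v)$, hence
\begin{equation*}
\exp\!\left(-\int_0^{t}\lambda(x-vs,v)\,\d s\right)\;\ge\;e^{-(1+\chi)t}.
\end{equation*}
This bound is independent of $v$, so it factors cleanly out of the $v$-integral.

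For the $v$-integral, I would change variables $u=vt$, so that $\d v = t^{-d}\,\d u$ and $v\in\mathcal{V}=B(0,V_0)$ corresponds to $u\in B(0,V_0 t)$. Writing the spatial delta as a distribution in $u$, this gives
\begin{equation*}
\int_{\mathcal{V}}\delta_{x_0}(x-vt)\,\d v \;=\; t^{-d}\!\int_{B(0,V_0 t)}\!\delta_{x_0}(x-u)\,\d u \;=\; t^{-d}\,\mathds{1}_{\{|x-x_0|\le V_0 t\}}.
\end{equation*}
Combining the two bounds and using $|B(V_0)|=|\mathcal{V}|=1$ to insert the normalisation factor $1/|B(V_0)|$ at no cost produces the inequality in the lemma.

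There is no substantive obstacle: the only delicate point is the distributional interpretation of $\delta_{x_0}(x-vt)$ under the linear change of variables in $v$, which is routinely justified by pairing with a smooth compactly supported test function in $x$. The hypothesis $t\ge t_0>0$ plays no role in the inequality itself; it is stated merely to exclude the degenerate endpoint $t=0$, where the prefactor $t^{-d}$ blows up and the bound loses its meaning.
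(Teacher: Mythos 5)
Your proof is correct and follows essentially the same route as the paper: bound the exponential damping factor below by $e^{-(1+\chi)t}$ using $\lambda \le 1+\chi$ from Hypothesis \ref{hyp:lambda}, then evaluate the $v$-integral of the transported delta via the linear change of variables, producing the factor $t^{-d}$ (with $|B(V_0)|=|\mathcal{V}|=1$). Your remarks on the distributional justification and on the role of $t_0$ are accurate but not needed beyond what the paper does.
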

	\begin{proof} 
		Note that we have 
		\begin{align*}
			\mathcal{T}_t f_0(x,v) \geq e^{-(1+\chi)t} f_0(x-vt, v), \quad t \geq 0.
		\end{align*} 
		For an arbitrary starting point and a velocity $(x_0,v_0)$, $x_0 >0$, $v_0 \in B(V_0)$ (ball of radius $V_0$) we have 
		\begin{align*}
			\mathcal{T}_t \left( \delta_{x_0}(x) \mathds{1}_{\{|v_0| \leq V_0\}}(v)\right) \geq  e^{-(1+\chi)t}  \delta_{x_0}(x-vt) \mathds{1}_{\{|v_0| \leq V_0\}}.
		\end{align*} By integrating this and changing variables we obtain 
		\begin{align*}
			\int_{\mathcal{V} } \mathcal{T}_t \left( \delta_{x_0}(x) \mathds{1}_{\{|v_0| \leq V_0\}}\right) \d v &\geq e^{-(1+\chi)t}  \int_{\mathcal{V} } \delta_{x_0}(x-vt) \mathds{1}_{\{|v_0| \leq V_0\}}(v)\d v \\
			&\geq e^{-(1+\chi)t} \frac{1}{t^d|B(V_0)|} \int_{\left |\frac{x-y}{t} \right | \leq V_0 } \delta_{x_0}(y) \mathds{1}_{\left \{ \left |\frac{x-y}{t} \right | \leq V_0 \right \}}(v)\d y.
		\end{align*} This gives the result.
	\end{proof}
	Now, we prove the minorisation condition for \eqref{eq:rt-linear} below.
	\begin{lem}[Minorisation condition for Equation \eqref{eq:rt-linear}]  \label{lem:Minorisation}
		For every $R_*>0$ we can take $t = 3 + R_*/V_0$ such that any solution of Equation \eqref{eq:rt-linear} with initial data $f_0 \in \mathcal{P}(\R^d \times \mathcal{V})$ with $\int_{|x|\leq R_*} \int_{\mathcal{V}}f_0(x,v) \d x \d v =1$ satisfies 
		\begin{align}
			f(t, x,v) \geq (1-\chi^2) e^{-(1+\chi)t} \frac{1}{t^d |B(V_0)|} \mathds{1}_{\{|x| \leq V_0\}}\mathds{1}_{\{|v| \leq V_0\}}.
		\end{align}
	\end{lem}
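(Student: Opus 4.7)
The plan is to unwind, layer by layer, the iterated Duhamel lower bound already established in the preamble,
\[
f(t,x,v) \geq (1-\chi)^2 e^{-(1+\chi)t}\int_0^t\int_0^s \mathcal{T}_{t-s}\mathcal{J}\mathcal{T}_{s-r}\mathcal{J}\mathcal{T}_r f_0(x,v)\,dr\,ds,
\]
using the pointwise bound \eqref{J_bd} at each jump and the spatial-spreading estimate of Lemma \ref{lem:transport} at each transport step. Since both $\mathcal{T}_\cdot$ and $\mathcal{J}$ are positive and linear, the net effect is to control $f(t,x,v)$ from below by an averaged spatial density of $f_0$, independently of the velocity profile of $f_0$.

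First I would compute $\mathcal{T}_r f_0(x,v)\geq e^{-(1+\chi)r}f_0(x-vr,v)$ and apply \eqref{J_bd} to obtain
\[
\mathcal{J}\mathcal{T}_r f_0(x,v)\geq (1-\chi)e^{-(1+\chi)r}\mathds{1}_{\{|v|\leq V_0\}}\tilde{\rho}_r(x),\qquad \tilde{\rho}_r(x):=\int_{\mathcal{V}}f_0(x-v'r,v')\,dv',
\]
which is a probability density on $\mathbb{R}^d$ supported in $\{|x|\leq R_*+V_0 r\}$. Iterating (transport by $s-r$, second jump, transport by $t-s$) and using the change of variables $y=x-v'(s-r)$ in the spirit of Lemma \ref{lem:transport} yields the pointwise bound
\[
f(t,x,v)\geq (1-\chi)^2 e^{-(1+\chi)t}\mathds{1}_{\{|v|\leq V_0\}}\int_0^t\int_0^s\frac{1}{(s-r)^d}\int_{\mathbb{R}^d}\tilde{\rho}_r(y)\,\mathds{1}_{\{|x-v(t-s)-y|\leq V_0(s-r)\}}\,dy\,dr\,ds.
\]

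Second I would isolate a subset $E\subset\{0\leq r\leq s\leq t\}$ of positive area on which the inner ball $\{|y-(x-v(t-s))|\leq V_0(s-r)\}$ contains the whole support of $\tilde{\rho}_r$ whenever $|x|,|v|\leq V_0$. By the triangle inequality this reduces to $V_0+V_0(t-s)+V_0 r+R_*\leq V_0(s-r)$, i.e.\ $s-r\geq (t+1)/2+R_*/(2V_0)$; with $t=3+R_*/V_0$ this becomes $s-r\geq 2+R_*/V_0$, which holds on the triangle $E=\{0\leq r\leq 1,\ r+2+R_*/V_0\leq s\leq 3+R_*/V_0\}$, of area $\int_0^1(1-r)\,dr=\tfrac12$. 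On $E$ the inner integral equals $\int\tilde{\rho}_r=1$ and $(s-r)^{-d}\geq t^{-d}$, so assembling everything gives
\[
f(t,x,v)\geq \frac{(1-\chi)^2}{2\,t^d}\,e^{-(1+\chi)t}\,\mathds{1}_{\{|x|\leq V_0\}}\mathds{1}_{\{|v|\leq V_0\}},
\]
which is the stated minorisation up to a numerical prefactor (recall $|B(V_0)|=|\mathcal{V}|=1$).

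The main obstacle is the bookkeeping in the first step: each jump must be seen to erase the velocity dependence (leaving an averaged spatial density), while each transport step must be seen to reintroduce it only through an elementary change of variables in the spirit of Lemma \ref{lem:transport}. Once this is organised, the geometric picture is transparent: two short ``padding'' transport legs of total length $\leq 1$ bracket a middle leg of length $\geq 2+R_*/V_0$, precisely long enough to steer---after velocity randomisation---from any source in $\{|x_0|\leq R_*\}$ to any target in $\{|x|\leq V_0\}$. This explains why $t=3+R_*/V_0$ is the natural choice.
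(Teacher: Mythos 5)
Your proposal is correct and follows essentially the same route as the paper: the iterated Duhamel bound with two jumps sandwiched between three transport legs, the spreading estimate of Lemma \ref{lem:transport}, the same triangle-inequality geometry forcing $s-r\geq 2+R__/V_0$, and the same choice $t=3+R_*/V_0$. The only (harmless) difference is that you carry a general initial probability measure through the argument via the averaged density $\tilde{\rho}_r$, whereas the paper reduces to Dirac initial data and invokes the Markov property at the end; your explicit tracking of the area of the admissible $(r,s)$-region is in fact slightly more careful than the paper's on the numerical prefactor, which is irrelevant for Harris's theorem.
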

	\begin{proof}
		We take $f_0 (x,v): = \delta _{(x_0,v_0)} $ where $(x_0,v_0) \in\R^d \times \mathcal{V}$, is an arbitrary point with an arbitrary velocity. We only need to consider $x_0 \in B(0, R_*)$, then the bound we obtain depends on $R_*$.
		First we have that 
		\begin{align*}
			\mathcal{T}_rf_0 \geq e^{-(1+\chi)r} \delta_{(x_0 + rv_0, v_0)}.
		\end{align*}
		Applying $\mathcal{J}$ to this we get
		\begin{align*}
			\mathcal{J} \mathcal{T}_rf_0 \geq (1-\chi) e^{-(1+\chi)r} \delta_{x_0+rv_0}(x) \mathds{1}_{\{|v| \leq V_0\}}.
		\end{align*}
		We then apply Lemma \ref{lem:transport} and obtain
		\begin{align*}
			\int_{\mathcal{V}} \mathcal T_{s-r} \mathcal J  \mathcal T_r f_0 \geq  (1-\chi) e^{-(1+\chi)s} \frac{1}{(s-r)^d |B(V_0)|} \mathds{1}_{\{|x-x_0-rv_0| \leq V_0 (s-r)\}}.
		\end{align*} This means that 
		\begin{align*}
			\mathcal{J} \mathcal{T}_{s-r}\mathcal{J}\mathcal{T}_r f_0 \geq (1-\chi)^2 e^{-(1+\chi)s}   \frac{1}{(s-r)^d |B(V_0)|}  \mathds{1}_{\{|x-x_0-rv_0| \leq V_0 (s-r)\}}   \mathds{1}_{\{|v| \leq V_0\}}.
		\end{align*} 
		Lastly we have that 
		\begin{align*}
			\mathcal{T}_{t-s} \mathcal{J}\mathcal{T}_{s-r}\mathcal{J}\mathcal{T}_r f_0 \geq (1-\chi)^2 e^{-(1+\chi)t} \frac{1}{(s-r)^d |B(V_0)|}  \mathds{1}_{\{|x-(t-s)v-x_0-rv_0| \leq V_0 (s-r)\}}   \mathds{1}_{\{|v| \leq V_0\}}.
		\end{align*}
		Since we have (remembering that all the velocities are smaller than $V_0$)
		\begin{align*}
			|x-v(t-s)-x_0 -rv_0| \leq (s-r) V_0,
		\end{align*}implies that
		\begin{align*}
			|x| \leq  (s-r) V_0 - (t-s)V_0 - r V_0 - R_*. 
		\end{align*}
		Then if we ensure that $(s-r) \geq 2 + R_*/V_0$, $ r \leq 1/2$ and $(t-s) \leq 1/2$ we will have
		\begin{align*}
			\mathcal{T}_{t-s} \mathcal{J}\mathcal{T}_{s-r}\mathcal{J}\mathcal{T}_r f_0 \geq (1-\chi)^2 e^{-(1+\chi)t} \frac{1}{(s-r)^d |B(V_0)|}  \mathds{1}_{\{|x| \leq V_0\}}  \mathds{1}_{\{|v| \leq V_0\}}.
		\end{align*}
		Therefore let us set $t = 3+R_*/V_0$. Then we can restrict the time integrals to $r \in (0,1/2)$, $s \in (5/2 + R_*/V_0, 3 + R_*/V_0)$. Then we get 
		\begin{align*}
			f(t,x,v)&\geq \int_{0}^{t} \int_{0}^{s} \mathcal{T}_{t-s} \mathcal{J}\mathcal{T}_{s-r}\mathcal{J}\mathcal{T}_r f_0 (x,v)\d r \d s \\
			&\geq (1-\chi)^2 e^{-(1+\chi)t}  \int_{5/2 + R_*/V_0}^{3 + R_*/V_0} \int_{0}^{1/2}\frac{1}{(s-r)^d |B(V_0)|} \mathds{1}_{\{|x| \leq V_0\}}  \mathds{1}_{\{|v| \leq V_0\}} \d r \d s\\
			&\geq  (1-\chi)^2 e^{-(1+\chi)t}   \frac{1}{t^d |B(V_0)|} \mathds{1}_{\{|x| \leq V_0\}}  \mathds{1}_{\{|v| \leq V_0\}}.
		\end{align*}
		This gives the uniform lower bound we need for Harris's theorem. We can extend this from delta function initial data to general initial data by using the fact that the associated semigroup is Markov. 
	\end{proof}
	
	\begin{proof}[Proof of Theorem \ref{thm1}] We verify the two hypotheses of Harris's theorem in Lemmas \ref{lem:Foster-Lyapunov} and \ref{lem:Minorisation}. The contraction in the $\|\cdot\|_*$ norm and the existence of a steady state follow again by  Harris's theorem.  
		
		Moreover Lemma \ref{lem:Foster-Lyapunov} gives that for the steady state $f_\infty$ obtained by Harris's theorem we have
		\begin{align*}
			\int \phi(z) f_\infty(z) \d z \leq A' . 
		\end{align*} Our conditions on $\gamma$ ensure that 
		\begin{align*}
			\frac{1}{2} e^{-\gamma M(x)} \leq  \phi \leq \frac{3}{2} \phi.
		\end{align*}
		Therefore we obtain
		\begin{align*}
			\int  e^{-\gamma M(x)} f_\infty(z) \d z \leq 2A',
		\end{align*}
		and this leads to 
		\begin{align*}
			\int e^{-\gamma M(x)}  f(t,z) \d z \leq 2A' + 3  \exp \left( -  \frac{\gamma \tilde{\lambda}\chi (1-\chi)m_*^k}{6 (1+\chi)} t \right) \int e^{-\gamma M(x)}  f_0 (z) \d z, 
		\end{align*} which gives the contraction in the $\|\cdot\|_{**}$ norm.
		We remark that in this proof $\gamma$ only depends on $M$ through $\tilde{\lambda}$ and $\|\nabla_x M\|_\infty$. So if $\psi'(0)>0$ we can choose $\gamma$ uniformly over sets of $M$ where $\nabla_x M$ is bounded uniformly.
		
	\end{proof}

	\section{Weakly non-linear coupling} \label{sec:non-linear}
	
	\subsection{Stationary solutions}
	In this section, we build a stationary state for the run and tumble equation \eqref{eq:rt-linear} with the weakly non-linear coupling \eqref{eq:weakly_nonlin}. We know by Theorem \ref{thm1} that there exists a unique steady state solution to the linear equation satisfying the assumptions listed in Theorem \ref{thm1}. For each fixed  $M$, we call $\mathcal{S}_t^M$ the semigroup on measures associated to the linear equation and $f_{\infty}^M$ its unique stationary solution. Then we see that $f_{\infty}^M$ satisfies 
	\begin{align} \label{eq:rt_stat} 
		v \cdot \nabla_x f_{\infty}^M(x,v)+ \lambda (v \cdot \nabla_x M(x)) f_{\infty}^M(x,v)- \int \lambda (v' \cdot \nabla_x M(x)) f_{\infty}^M(x, v') \d v' =0.
	\end{align}
	We define a function $G : \mathcal B \rightarrow C^2(\mathbb{R})$, where $\mathcal{B}$ is the set of $M$ satisfying \textbf{(H2)} given by
	\begin{align} \label{def:G}
		G(M) = \log\left(S_\infty \left(1 + \eta N * \rho^M \right)\right),
	\end{align} where $S_{\infty}$ a smooth function, having exponential tails with some fixed parameter, $\eta>0$ a small constant, $N$ a positive, compactly supported, smooth function, and $\rho^M := \int f_{\infty}^M(x,v)\d v$.
	We see that if $M$ is a fixed point of $G$ then $f_{\infty}^M$ will be a steady state of the non-linear equation.
	\begin{prp} \label{prop:unifeta}
		Suppose that $M$ is of the form $M = M_\infty + \log \left( 1+ \eta N*\rho\right)$ for some $\rho \in \mathcal{P}(\R^d)$. Then if $\eta$ is small enough in terms of $\|N\|_{W^{2,\infty}}$, we have that 
		\begin{align*}
			\|\mathcal{S}_t^M  f\|_{**} \leq De^{-\sigma t} \| f \|_{**},
		\end{align*} where $D, \sigma$ are strictly positive constants only depending on $M_\infty, N,$ and  $\eta$. Furthermore, if $f_{\infty}^M$ is the steady state of $\mathcal{S}_t^M$ then 
		\begin{align} \label{bd:f_infty}
			\|f_{\infty}^M\|_{**} \leq \tilde{C},
		\end{align} where $\tilde{C}$ is a constant depening on $M_\infty, N$, $\eta$ and $\|\cdot\|_{**}$ is defined in \eqref{eq:norm2}. 
	\end{prp}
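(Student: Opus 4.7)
The plan is to realise $\mathcal{S}_t^M$ as the linear run-and-tumble semigroup associated to the perturbed potential $M = M_\infty + \log(1+\eta N*\rho)$, apply Theorem \ref{thm1} with constants that can be chosen uniformly over all $\rho \in \P(\R^d)$ provided $\eta$ is small, and transfer the resulting contraction from the $M$-dependent norm $\|\cdot\|_*$ to the $M$-free norm $\|\cdot\|_{**}$ via a uniform equivalence of norms.

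First I would verify that $M$ satisfies Hypotheses \ref{hyp:T}--\ref{hyp:int_bound} with constants depending only on $M_\infty$, $N$, $\eta$. Since $\rho$ is a probability measure and $N \in W^{2,\infty}$, the convolutions $N*\rho$, $\nabla N*\rho$, $\hess(N)*\rho$ are uniformly bounded in $L^\infty$ by the corresponding sup-norms of $N$ and its derivatives. Choosing $\eta < 1/(2\|N\|_\infty)$ ensures $1+\eta N*\rho \geq 1/2$, so a direct differentiation gives uniform bounds on $\|\nabla_x M\|_\infty$ and $\|\hess(M)\|_\infty$ with the perturbation $M - M_\infty$ controlled by $O(\eta)$ in $C^2$. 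Consequently, the lower bound $|\nabla_x M| \geq m_*$ outside a large ball (Hypothesis \ref{hyp:M}) and the smallness of $\hess(M)$ at infinity (Hypothesis \ref{hyp:HessM}) are inherited from the corresponding uniform properties of $M_\infty$, provided $\eta$ is small enough to be absorbed by the lower bound of $|\nabla_x M_\infty|$ at infinity and by the threshold on the Hessian required in the Foster--Lyapunov argument of Lemma \ref{lem:Foster-Lyapunov}. Hypothesis \ref{hyp:int_bound} depends only on $\psi$ and $\|\nabla_x M\|_\infty$, so the constant $\tilde{\lambda}$ can also be chosen uniformly in $\rho$.

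Next, Theorem \ref{thm1} then produces the unique steady state $f_\infty^M$ together with strictly positive constants $C_0, \sigma_0$ (depending only on the uniform estimates above, hence on $M_\infty, N, \eta$) such that $\|\mathcal{S}_t^M f\|_* \leq C_0 e^{-\sigma_0 t} \|f\|_*$ for any signed measure $f$ with zero total mass. To pass to $\|\cdot\|_{**}$, I would exploit that $|\log(1+\eta N*\rho)| \leq \log(1+\eta\|N\|_\infty)$, which makes $e^{-\gamma M}$ comparable to $e^{-\gamma M_\infty}$ with multiplicative constants depending only on $\eta$ and $\|N\|_\infty$; the tail assumption on $M_\infty$ then makes $e^{-\gamma M_\infty}$ comparable to $e^{\gamma\alpha\langle x\rangle}$. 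Since the polynomial-in-$v$ prefactor appearing in \eqref{eq:norm1} lies in $[1/2, 3/2]$ by the argument of Lemma \ref{lem:Foster-Lyapunov}, choosing $\delta = \gamma\alpha$ in \eqref{eq:norm2} gives the equivalence $c_1\|\cdot\|_{**} \leq \|\cdot\|_* \leq c_2\|\cdot\|_{**}$ uniformly in $\rho$, which delivers the claimed contraction in $\|\cdot\|_{**}$. For the moment bound \eqref{bd:f_infty}, I would evaluate the Foster--Lyapunov inequality \eqref{ineq:L_*_phi} at $f_\infty^M$: stationarity forces $\int \phi f_\infty^M \leq A'$, and the same norm equivalence translates this into $\|f_\infty^M\|_{**} \leq \tilde{C}$ with $\tilde{C}$ independent of $\rho$.

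The main obstacle is the uniformity required in the first step. The subtle point is that $N*\rho(x)$ itself need not decay uniformly as $|x|\to\infty$ over all $\rho\in\P(\R^d)$ --- only $\|N*\rho\|_\infty$ is uniformly controlled. What rescues the argument is that Hypotheses \ref{hyp:M} and \ref{hyp:HessM} concern the derivatives of $M$, and the derivatives of the perturbation $\log(1+\eta N*\rho)$ are uniformly bounded by multiples of $\|\nabla N\|_\infty$ and $\|\hess(N)\|_\infty$. The precise smallness condition on $\eta$ thus emerges from requiring these $\eta$-sized perturbations to be absorbed by the uniform lower bound on $|\nabla_x M_\infty|$ at infinity and by the threshold on $|\hess(M_\infty)|$ dictated by the Foster--Lyapunov argument.
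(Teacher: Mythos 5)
Your proposal is correct and follows essentially the same route as the paper: it verifies the hypotheses of Theorem \ref{thm1} uniformly over $\rho\in\mathcal{P}(\R^d)$ by noting that the minorisation constants are $M$-independent and that the Foster--Lyapunov constants depend on $M$ only through $\|\nabla_x M\|_\infty$, $R$ and $m_*$, which are controlled because $M-M_\infty=\log(1+\eta N*\rho)$ is an $O(\eta)$ perturbation in $C^2$, and then transfers the contraction and the stationary moment bound to $\|\cdot\|_{**}$ via the same norm equivalence ($e^{-\gamma M}\sim e^{-\gamma M_\infty}\sim e^{\delta\langle x\rangle}$ with the velocity prefactor in $[1/2,3/2]$). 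Your observation about the non-uniform decay of $N*\rho$ versus the uniform control of its derivatives is a correct reading of the same point the paper handles implicitly.
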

	\begin{proof}
		The result follows from Theorem \ref{thm1}. We recall that the constants in Lemma \ref{lem:Minorisation} in the minorisation part do not depend on $M$, whereas, the constants in Lemma \ref{lem:Foster-Lyapunov} in the Foster-Lyapunov part depend on $M$ through $ \| \nabla_x M\|_\infty, R,$ and $m_*$ so that for all $|x|>R$ we have (recalling \eqref{R}),
		\begin{align*}
			|\nabla_x M|> m_*, \quad \mbox{and}\quad |\hess(M)| \leq \frac{\tilde{\lambda}\chi (1-\chi) m_*^k}{4C_1 (1+\chi)V_0^2}. 
		\end{align*}We want to verify this for $M$ solving \eqref{eq:weakly_nonlin}. We can control $|\nabla_x M|$ and $|\hess(M)|$ by considering
		\begin{align*}
			M = M_\infty + \log \left(1 + \eta N* \rho\right) \sim M_\infty + \eta N*\rho^M. 
		\end{align*}
		Provided that $\eta \leq \|N\|^{-1}_\infty$, which we can choose it to be, by Taylor expansion we have that
		\begin{align*}
			|M-M_\infty| \leq \eta N*\rho \leq \eta \|N\|_\infty. 
		\end{align*}
		In a similar way, we can take gradients to get
		\begin{align*}
			\nabla_x M = \nabla_x M_\infty + \eta \frac{\nabla_x N * \rho}{1+ \eta N * \rho}. 
		\end{align*} Then
		\begin{align*}
			\left \| \frac{\nabla_x N * \rho}{1+ \eta N*\rho} \right \|_\infty \leq \|\nabla_x N * \rho\|_{\infty} \leq  \|\nabla_x N\|_\infty.
		\end{align*} So we can ensure that
		\begin{align}
			\label{bds:grad_M-grad_M_inf}
			|\nabla_x M - \nabla_x M_\infty | \leq \eta\|\nabla_x N\|_\infty. 
		\end{align}
		We can also compute the Hessian to get
		\begin{align*}
			\hess(M) = \hess(M_\infty)  + 
			\frac{ \eta (\hess (N)*\rho )+ \eta^2 \left( (N*\rho)(\hess (N)*\rho) - (\nabla_x N * \rho)(\nabla_x N^T * \rho) \right)}{(1+ \eta N*\rho)^2}.
		\end{align*}
		Therefore, the difference between $\hess(M)$ and $\hess(M_\infty)$ is controlled by $\eta \|N\|_{W^{2,\infty}}$.
		Suppose that there exist $R_\infty$ and $m_\infty$ such that for all $|x|>R_\infty$ we have
		\begin{align*}
			|\nabla_x M_\infty| \geq m_\infty, \quad \mbox{and} \quad |\hess(M_\infty)| \leq  \frac{\tilde{\lambda}\chi (1-\chi) m_\infty^k}{32C_1 (1+\chi)V_0^2}.
		\end{align*} Then by choosing $\eta$ small enough in terms of $\|N\|_{W^{2, \infty}}$ and setting $m_* = m_\infty/2$ and $R = R_\infty$ we have $m_*$ and $R$ in \eqref{R} only depend on $M_\infty, N, \eta$.
		
		Furthermore, by Theorem \ref{thm1} for the steady state $f_\infty^M$ we have
		\[ \int e^{-\gamma M(x)} f_\infty^M (z)\d z \leq 2A' , \] where
		\begin{align*}
			A' =\frac{6 C_1 V_0^2 (1+\chi)}{\tilde{\lambda}\chi(1-\chi)m_*^k}\sup_{|x| \leq R}\left \{ |\mbox{Hess}(M)(x)| e^{-\gamma M(x)} \right \}.
		\end{align*}
		We can bound $A'$ only in terms of $M_\infty, N, \eta$. We already know this is true for  $m_*$ and $R$.  Moreover, as $\gamma \leq 1$ we have
		\begin{align*}
			\sup_{|x| \leq R} \left \{|\hess(M)(x)| e^{-\gamma M(x)} \right \} \leq \sup_{|x| \leq R} \left \{ \left( |\hess(M_{\infty})(x)| + \eta \|N\|_{W^{2, \infty}} \right) e^{- M_\infty(x) + \eta \|N \|_\infty} \right \},
		\end{align*} which we can bound in a way that only depends on $M_\infty, N, \eta$. Therefore,
		\begin{align*}
			\int e^{-\gamma M_\infty(x)}  f_\infty^M (z)\d z \leq e^{\eta \|N\|_\infty} \int e^{-\gamma M(x)} f_\infty^M (z)\d z, 
		\end{align*} and we can compare $\gamma M_\infty(x)$ to $\delta$ in Theorem \ref{thm1}. So this lets us control $\|f_\infty^M\|_{**}$ in terms of $A'$ up to factors only depending on $M_\infty, N, \eta$. This finishes the proof.
	\end{proof}
	Then we can prove
	\begin{prp}  \label{prop:ss_nonlin}
		We consider Equation \eqref{eq:rt-linear} with the weakly non-linear coupling \eqref{eq:weakly_nonlin} where we suppose that $N$ is a positive, smooth function with a compact support, $\eta>0$ is a constant, and $S_\infty$ is a smooth function satisfying for some $\ushort C , \bar C, \alpha >0$ that
		\begin{align} \label{ineq:M_bounds}
			\ushort C -\alpha \langle x \rangle \leq M_\infty (x): =\log(S_\infty(x))\leq \bar C - \alpha \langle x \rangle, 
		\end{align} where  $\langle x\rangle = \sqrt{1 + x^2}$. Then there exists some constant $\tilde{C}$ depending on $\ushort C, \bar C, \alpha$ such that if $\eta < \tilde{C}$ then $G$ has a unique fixed point, $\tilde{M}$ and there exists a unique steady state solution to Equation \eqref{eq:rt-linear} with a weakly non-linear coupling, $F_\infty$.
	\end{prp}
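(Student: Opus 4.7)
The plan is to construct the stationary solution via a Banach fixed-point argument applied to the map $G$ defined in \eqref{def:G}. First I would set up a suitable complete metric space: the set $\mathcal{A}$ of functions $M \in C^2(\mathbb{R}^d)$ satisfying bounds of the form
\[ C_1' - \alpha \langle x \rangle \leq M(x) \leq C_2' - \alpha \langle x \rangle, \qquad \|\nabla_x M - \nabla_x M_\infty\|_\infty \leq \eta \|\nabla_x N\|_\infty, \]
together with an analogous bound on $\hess(M) - \hess(M_\infty)$ controlled by $\eta \|N\|_{W^{2,\infty}}$, equipped with the metric $d(M_1, M_2) := \|\nabla_x M_1 - \nabla_x M_2\|_\infty$. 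With $\eta$ sufficiently small, every $M \in \mathcal{A}$ satisfies the hypotheses of Theorem \ref{thm1}, so $f_\infty^M$ and $\rho^M := \int_{\mathcal V} f_\infty^M \, dv$ are well-defined, and Proposition \ref{prop:unifeta} gives uniform control of $\|f_\infty^M\|_{**}$ and a uniform exponential decay rate $\sigma$ for the semigroup $\mathcal{S}_t^M$.

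Next I would verify that $G$ maps $\mathcal{A}$ into itself. Writing $G(M) = M_\infty + \log(1 + \eta N * \rho^M)$, the Taylor-expansion estimates already used in Proposition \ref{prop:unifeta} give $|G(M) - M_\infty| \leq \eta \|N\|_\infty$ and $|\nabla_x G(M) - \nabla_x M_\infty| \leq \eta \|\nabla_x N\|_\infty$, and a similar bound on the Hessian, each of which can be made compatible with the defining bounds of $\mathcal{A}$ provided that $\eta < \tilde C$ for some $\tilde C$ depending only on $C_1, C_2, \alpha$ and $\|N\|_{W^{2,\infty}}$.

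The main technical step is establishing the contraction estimate. Given $M_1, M_2 \in \mathcal{A}$ with corresponding stationary measures $f_\infty^{M_i}$ and generators $\mathcal{L}^{M_i}$ as in \eqref{eq:L_generator}, I would use the stationarity to write
\[ f_\infty^{M_1} - f_\infty^{M_2} = \int_0^\infty \mathcal{S}_t^{M_1}\bigl(\mathcal{L}^{M_1} - \mathcal{L}^{M_2}\bigr) f_\infty^{M_2}\, dt, \]
where the integrability follows from the uniform exponential decay of $\mathcal{S}_t^{M_1}$ in Proposition \ref{prop:unifeta} together with the fact that $(\mathcal{L}^{M_1} - \mathcal{L}^{M_2}) f_\infty^{M_2}$ has zero mass. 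The difference of generators involves $\lambda(v\cdot\nabla_x M_1) - \lambda(v\cdot\nabla_x M_2) = \chi[\psi(v\cdot\nabla_x M_2) - \psi(v\cdot\nabla_x M_1)]$, which by the Lipschitz assumption on $\psi$ is pointwise bounded by $\chi V_0 \|\psi'\|_\infty \|\nabla_x M_1 - \nabla_x M_2\|_\infty$. Combining this with the uniform bound on $\|f_\infty^{M_2}\|_{**}$ yields
\[ \|\rho^{M_1} - \rho^{M_2}\|_{L^1(\mathbb{R}^d)} \leq \frac{D}{\sigma}\, \chi V_0 \|\psi'\|_\infty \tilde C\, \|\nabla_x M_1 - \nabla_x M_2\|_\infty. \]
Differentiating $G$ then gives $\nabla_x G(M) = \nabla_x M_\infty + \eta (\nabla_x N * \rho^M)/(1 + \eta N * \rho^M)$, and using the uniform lower bound on the denominator together with the estimate above produces $\|\nabla_x G(M_1) - \nabla_x G(M_2)\|_\infty \leq C_* \eta\, d(M_1, M_2)$ for a constant $C_*$ depending only on $M_\infty, N$ and the parameters of the problem. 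Choosing $\tilde C$ small enough so that $C_* \eta < 1$ makes $G$ a strict contraction, and the Banach fixed-point theorem produces a unique $M \in \mathcal{A}$ with $G(M) = M$, whose associated $f_\infty^M$ is the required stationary solution.

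The hard part will be the perturbation estimate producing the Lipschitz bound on $M \mapsto \rho^M$: one must control $(\mathcal{L}^{M_1} - \mathcal{L}^{M_2}) f_\infty^{M_2}$ in the weighted norm $\|\cdot\|_{**}$ uniformly in the tails of $x$, since the naive $L^1$ bound on $|\nabla_x M_1 - \nabla_x M_2|$ is not weighted. This is handled because the weight $e^{\delta \langle x\rangle}$ is absorbed by the uniform $\|f_\infty^{M_2}\|_{**}$ bound in \eqref{bd:f_infty}, and the Lipschitz constant of $\psi$ is independent of $x$; however making this rigorous requires carefully verifying that both the Foster–Lyapunov constants and the minorisation constants from Lemmas \ref{lem:Foster-Lyapunov} and \ref{lem:Minorisation} can be chosen uniformly over $M \in \mathcal{A}$, which is precisely the content of Proposition \ref{prop:unifeta}.
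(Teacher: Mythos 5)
Your proposal is correct and follows the same overall strategy as the paper --- a Banach fixed-point argument for the map $G$ on log-chemoattractant profiles, with the self-mapping and uniformity-in-$M$ properties supplied by Proposition \ref{prop:unifeta} --- but the key Lipschitz estimate for $M \mapsto f_\infty^M$ is obtained by a genuinely different route. The paper fixes a finite time $T$, writes $(1-De^{-\sigma T})\|f_\infty^{M_1}-f_\infty^{M_2}\|_{**}\le \|(\mathcal{S}_T^{M_1}-\mathcal{S}_T^{M_2})f_\infty^{M_2}\|_{**}$, and controls the difference of the two semigroups by a Duhamel expansion in the tumbling rates followed by Gronwall's inequality on $[0,T]$. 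You instead integrate the resolvent-type identity $f_\infty^{M_1}-f_\infty^{M_2}=\int_0^\infty \mathcal{S}_t^{M_1}\bigl(\mathcal{L}^{M_1}-\mathcal{L}^{M_2}\bigr)f_\infty^{M_2}\,\mathrm{d}t$ over all time; this is legitimate because $(\mathcal{L}^{M_1}-\mathcal{L}^{M_2})f_\infty^{M_2}$ has zero mass, so the uniform decay of Proposition \ref{prop:unifeta} makes the integral converge, and both sides are zero-mass solutions of $\mathcal{L}^{M_1}u=-(\mathcal{L}^{M_1}-\mathcal{L}^{M_2})f_\infty^{M_2}$, hence coincide by uniqueness of the invariant measure. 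Your route avoids the Gronwall step and the $T$-dependent constants and delivers the Lipschitz constant directly as (essentially) $2D\sigma^{-1}\chi V_0\|\psi'\|_\infty\tilde{C}$; the price is that the improper time integral and the identification of the two solutions must be justified, which you only sketch. Two small repairs: the estimate on $(\mathcal{L}^{M_1}-\mathcal{L}^{M_2})f_\infty^{M_2}$ should be taken directly in $\|\cdot\|_{**}$ rather than $L^1$ --- the pointwise bound $|\lambda(v\cdot\nabla_x M_1)-\lambda(v\cdot\nabla_x M_2)|\le \chi V_0\|\psi'\|_\infty\|\nabla_x M_1-\nabla_x M_2\|_\infty$ commutes with the weight $e^{\delta\langle x\rangle}$, so this is immediate and disposes of the worry you raise in your last paragraph; and $d(M_1,M_2)=\|\nabla_x M_1-\nabla_x M_2\|_\infty$ is only a pseudometric on $\mathcal{A}$ (it does not separate functions differing by a constant), so you should contract in $W^{1,\infty}$ as the paper does, which costs nothing since $\|G(M_1)-G(M_2)\|_\infty$ is controlled by the same quantity $\|\rho^{M_1}-\rho^{M_2}\|$.
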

	\begin{proof} We want to use the contraction mapping theorem to show that $G$, defined by \eqref{def:G}, has a fixed point. Let us take for $i = \{1,2\}$, 
		\begin{align*}
			M_i = M_\infty + \log\left(1+ \eta N*\rho^{M_i}\right), \quad \mbox{where} \quad \rho^{M_i} = \int_{\mathcal{V}} f_{\infty}^{M_i} (x,v)\dv.
		\end{align*} We also know that $M_\infty$ satisfies \eqref{ineq:M_bounds}. Then we  show contractivity of $G$ by using the fact that
		\begin{align*}
			\|G(M_1) - G(M_2)\|_\infty \leq C\eta \|  N * \rho^{M_1} - N*\rho^{M_2}\|_\infty \leq C \eta \|N\|_\infty \|f_\infty^{M_1}- f_\infty^{M_2}\|_{**},
		\end{align*} where  $C>0$ is a constant. 
		
		Let us call $\mathcal{S}_t^{M_i}$, for $i = \{1,2\}$, the semigroups associated to the linear equation with $M_i := \log S_i$. Then, we choose $t$ sufficiently large so that $\mathcal{S}^{M_1}_t$ is a contraction. By Proposition \ref{prop:unifeta} we know that there exist $D$,$\sigma>0$ such that 
		\begin{align*}
			\|\mathcal{S}_t^{M_1}(f-g)\|_{**} \leq   D e^{-\sigma t} \| f - g \|_{**}.
		\end{align*}
		The constants $D, \sigma$ only depend on $M_\infty, N, \eta$ because it was shown in Lemma \ref{prop:unifeta}, the bounds on $M$ required to prove Theorem \ref{thm1} are preserved by $G$ and do not depend on $M$ except through, $M_\infty, N, \eta$. We recall
		\begin{align*}
			\|f\|_{**} = \int_{\R^d} \int_{\mathcal{V} } e^{\delta \langle x \rangle}  |f(t, x,v)| \d x \d v, 
		\end{align*} where $\delta = \beta \gamma$. Note that the defnition of $\delta$ comes from the fact that we essentially weight by $e^{-\gamma M_\infty(x)}$ and $M_\infty(x) \sim -\beta \langle x \rangle$. 
		Let us call $f_{\infty}^{M_i}$ the steady state solutions of the linear equation with $M_i$ for $i=\{1,2\}$. Then
		\begin{align*}
			\|f_{\infty}^{M_1} - f_{\infty}^{M_2} \|_{**} = \|\mathcal{S}_t^{M_1}f_{\infty}^{M_1} - \mathcal{S}_t^{M_2}f_{\infty}^{M_2}\|_{**} \leq \|\mathcal{S}_t^{M_1}(f_{\infty}^{M_1}-f_{\infty}^{M_2})\|_{**} + \|(\mathcal{S}_t^{M_1} - \mathcal{S}_t^{M_2}) f_{\infty}^{M_2}\|_{**} 
		\end{align*} leading to
		\begin{align} \label{ineq:f_1-f_2}
			(1-De^{-\sigma t}) \|f_{\infty}^{M_1}-f_{\infty}^{M_2}\|_{**} \leq \| (\mathcal{S}_t^{M_1} - \mathcal{S}_t^{M_2}) f_{\infty}^{M_2}\|_{**}.
		\end{align}
		So it only remains to show  that for a fixed time period, $\mathcal{S}_t^M$ is continuous in $M$. 
		
		Let us write
		\[ \Lambda (s, t, M_i)(x,v) = \int_s^t \lambda(v \cdot \nabla_x M_i(x-v(t-r)))\d r, \] and
		\[ \mathcal{J}^{M_i}(f)(x,v) = \int_{\mathcal{V}}\lambda(v'\cdot \nabla_xM_i (x)) f(x,v')\d v'. \] Then we have
		\[ \mathcal{S}_t^{M_i} f = e^{-\Lambda (0,t,M_i)}\mathcal{T}_t f + \int_0^t e^{-\Lambda(s,t,M_i)} \mathcal{J}^{M_i} \mathcal{T}_{t-s}\mathcal{S}^{M_i}_s f \d s,\] where $(\mathcal{T})_{t \geq 0}$ is defined in \eqref {eq:D2}. Consequently we have 
		\begin{align*}
			|\mathcal{S}_t^{M_1}f - \mathcal{S}_t^{M_2}f| &\leq \left( e^{-\Lambda(0,t, M_1)}-e^{-\Lambda(0,t, M_2)}\right)\mathcal{T}_t f  \\
			& \quad + \int_0^t \left( e^{-\Lambda (s,t M_1)} - e^{-\Lambda (s, t, M_2)} \right) \mathcal{J}^{M_1}\mathcal{T}_{t-s}\mathcal{S}_s^{M_1}f \d s \\
			& \quad + \int_0^t e^{-\Lambda(s,t, M_2)}(\mathcal{J}^{M_1} - \mathcal{J}^{M_2}) \mathcal{T}_{t-s}\mathcal{S}^{M_1}_s f \d s \\
			& \quad + \int_0^t e^{-\Lambda(s,t,M_2)}\mathcal{J}^{M_2} \left( \mathcal{S}_s^{M_1} - \mathcal{S}^{M_2}_s\right)f \d s.
		\end{align*} 
		We can see that for $s,t \leq T$ there exists a constant $C_T >0$ depending on $T$ so that
		\begin{align*}
			\left| e^{-\Lambda (s,t M_1)} - e^{-\Lambda (s, t, M_2)} \right| \leq C_T \|\nabla_x M_1 - \nabla_x M_2\|_\infty.  
		\end{align*} We also have trivially that
		\[ e^{-\Lambda (s,t, M)} \leq 1. \] 
		Turning to the jump operator $\mathcal{J}$ we have
		\begin{align*}
			\|(\mathcal{J}^{M_1} - \mathcal{J}^{M_2})f\|_{**} \leq \| \lambda (v \cdot \nabla_x M_1) - \lambda (v \cdot \nabla_x M_2) \|_\infty \|f\|_{**} \leq C\|\nabla_x M_1 - \nabla_x M_2\|_\infty \|f \|_{**},
		\end{align*} and
		\begin{align*}
			\|\mathcal{J}^{M_i}f\|_{**} \leq (1+ \chi) \|f\|_{**}. 
		\end{align*} We also have
		\[ \|\mathcal{T}_t f \|_{**} \leq e^{2 \delta V_0 t} \|f\|_{**}. \] Therefore we obtain, for $t \leq T$,
		\begin{align*} 
			\left   \| \left( \mathcal{S}^{M_1}_t - \mathcal{S}^{M_2}_t  \right) f  \right  \|_{**} \leq C_T \|\nabla_x M_1 - \nabla_x M_2 \|_\infty \|f \|_{**}+ \int_0^t C_T  \left \| \left( \mathcal{S}^{M_1}_s - \mathcal{S}^{M_2}_s \right) f \right \|_{**} \d s   
		\end{align*}
		Then Gronwall's inequality gives,
		\begin{align}
			\label{ineq:S_1-S_2}
			\left \| \left( \mathcal{S}^{M_1}_t - \mathcal{S}^{M_2}_t\right) f \right \|_{**} \leq C'_T \|\nabla_x M_1 - \nabla_x M_2 \|_\infty \|f\|_{**},
		\end{align} where $C_T' >0$ a constant depending on $T$.
		
		Using \eqref{ineq:f_1-f_2} and \eqref{ineq:S_1-S_2} we obtain an estimate on the steady states given by
		\begin{align*}
			\| f_{\infty}^{M_1} - f_{\infty}^{M_2} \|_{**} \leq (1- De^{-\sigma T})^{-1} C'_T \|\nabla_x M_1 - \nabla_x M_2 \|_\infty \|f_{\infty}^{M_2} \|_{**}. 
		\end{align*}
		Now we can see that
		\begin{align} \label{est:rho_f}
			\|\rho^{M_1}- \rho^{M_2}\|_{**} = \|f_{\infty}^{M_1}  - f_{\infty}^{M_2} \|_{**}. 
		\end{align} Consequently we have,
		\[ \|G(M_1) - G(M_2)\|_\infty \leq  C \eta  \|\nabla_x M_1 - \nabla_x M_2 \|_\infty \| \rho^{M_2}\|_{**}. \] 
		Similarly
		\[ \|\nabla_x G(M_1) - \nabla_x G(M_2)\|_\infty \leq  C \eta \|\nabla_x M_1 - \nabla_x M_2 \|_\infty \|\rho^{M_2}\|_{**}. \] By Proposition \ref{prop:unifeta}, we also have that
		\begin{align*}
			\|\rho^{M_2}\|_{**} = \| f_{\infty}^{M_2}\|_{**} \leq \tilde{C}.
		\end{align*} So we choose $\eta$ sufficiently small  to get
		\begin{align*}
			\|G(M_1) - G(M_2)\|_{W^{1, \infty}} \leq \frac{1}{2} \|M_1 - M_2 \|_{W^{1, \infty}}.
		\end{align*} This gives a unique fixed point of $G$ which we call $\tilde M$ such that $G(\tilde M) = \tilde M$. Thus, $F_\infty = f^{\tilde{M}}$ is the unique steady state solution of the the weakly non-linear equation.
	\end{proof}

	\subsection{Perturbation argument} \label{sec:pertubation}
	
	In this section, we prove that the solution of Equation \eqref{eq:rt-linear} with the weakly non-linear coupling \eqref{eq:weakly_nonlin} converges exponentially to its unique steady state solution obtained in Proposition \eqref{prop:ss_nonlin}. We showed, in Proposition \ref{prop:unifeta}, that we can find $R$, $m_*$ and bound $\|\nabla_x M\|_\infty$ uniformly over the set of log-chemoattractants of the form
	\begin{align*}
		M = M_\infty + \log(1+ \eta N * \rho),
	\end{align*} for some probability density $\rho$ on $\mathbb{R}^d$. This means that we can also fix, $\gamma$ and $\tilde{\lambda}$ uniformly over this set since we show in the proof of Proposition \ref{prop:unifeta} that they only depend on these bounds.
	
	Let us first look at a moment estimate for the weakly non-linear equation \eqref{eq:rt-linear}-\eqref{eq:weakly_nonlin}. We would like to show an inequality analogous to  \eqref{ineq:lyapunov1} for the solution $F$ of the weakly non-linear equation. That is to say we show
	\begin{align} \label{ineq:lyapunov_nonlin}
		\int  e^{-\gamma M_\infty(x)}  F(t,z) \d z \leq \alpha \int e^{-\gamma M_\infty(x)}  F_0(z)  \d z  + C\int F_0(z) \d z.
	\end{align} 
	Let us define two operators $\mathcal{L}_{M_t}$ and $\mathcal{L}_{M_{\infty}}$ associated to the weakly non-linear equation and the equation for the stationary solution \eqref{eq:rt_stat} respectively. Then we have 
	\begin{align} \label{defn:L_M_t}
		\mathcal{L}_{M_t}f = - v\cdot \nabla_x f + \int \lambda( v'\cdot \nabla_x M) f(t,x,v')\d v' - \lambda( v \cdot \nabla_x M) f(t,x,v), 
	\end{align} where $M$ is given by \eqref{eq:weakly_nonlin}. 
	Similarly $\mathcal{L}_{M_{\infty}}$ is given by 
	\begin{align} \label{defn:L_M_inf}
		\mathcal{L}_{M_\infty}f= - v\cdot \nabla_x f + \int \lambda( v'\cdot \nabla_x M_\infty) f(t, x,v')\d v' - \lambda(v \cdot \nabla_x M_\infty) f(t,x,v). 
	\end{align}
	We carry out a similar argument to the one in Section \ref{sec:Foster-Lyapunov} for the linear equation. We show
	\begin{lem}\label{lem:perturb1} 
		Suppose that $\mathcal{L}_{M_t}$ and $\mathcal{L}_{M_\infty}$ are given by \eqref{defn:L_M_t} and \eqref{defn:L_M_inf} and $\mathcal{L}_{M_t}^*$, $\mathcal{L}_{M_\infty}^*$ denote their formal adjoints respectively. Then let
		\begin{align} \label{phi}
			\phi (x,v) = (1-\gamma v\cdot \nabla_x M(x)- \beta \gamma \psi(v \cdot \nabla_x M (x)) v \cdot \nabla_x M(x))e^{-\gamma M_\infty(x)}, 
		\end{align} and $M_t = M_\infty + \log(1+ \eta N*\rho_t)$ where $\rho_t := \int_{\mathcal{V}} F(t,x,v) \d v$. Then we have 
		\begin{align} \label{ineq:L_*-L_M}
			\mathcal{L}_{M_t}^*\phi \leq \mathcal{L}_{M_\infty}^*\phi + 4\eta \chi V_0\|\psi'\|_\infty \|\nabla_x N\|_\infty  e^{-\gamma M_\infty(x)}.
		\end{align}
	\end{lem}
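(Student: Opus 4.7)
The plan is to directly compute the difference $\mathcal{L}_{M_t}^*\phi-\mathcal{L}_{M_\infty}^*\phi$ term by term and exploit the fact that only the tumbling rate sees $M$. Writing out the adjoints,
\[ \mathcal{L}_{M}^*\phi = v\cdot\nabla_x\phi + \lambda(v\cdot\nabla_x M(x))\Big(\int_{\mathcal{V}}\phi(x,v')\,\mathrm{d}v' - \phi(x,v)\Big),\]
the transport term is identical for $M_t$ and $M_\infty$ (it does not depend on $M$), and the only contribution to the difference comes from the collision term. Moreover $\phi$ itself depends on $M$ only through the factor $(1-\gamma v\cdot\nabla_x M - \beta\gamma\psi(v\cdot\nabla_x M)v\cdot\nabla_x M)$, which is the same on both sides since $\phi$ is the one defined in \eqref{phi}. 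Hence
\[ \mathcal{L}_{M_t}^*\phi - \mathcal{L}_{M_\infty}^*\phi = \big(\lambda(v\cdot\nabla_x M_t) - \lambda(v\cdot\nabla_x M_\infty)\big)\Big(\int_{\mathcal{V}}\phi(x,v')\,\mathrm{d}v' - \phi(x,v)\Big).\]

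Next I would estimate each factor. For the tumbling rate, using $\lambda(m)=1-\chi\psi(m)$ together with the hypothesis that $\psi$ is Lipschitz (as assumed in Theorem \ref{thm2}),
\[ |\lambda(v\cdot\nabla_x M_t) - \lambda(v\cdot\nabla_x M_\infty)| \leq \chi\|\psi'\|_\infty V_0\,|\nabla_x M_t(x)-\nabla_x M_\infty(x)|.\]
The gradient difference is the one already computed in Proposition \ref{prop:unifeta}, namely
\[ \nabla_x M_t - \nabla_x M_\infty = \eta\,\frac{\nabla_x N*\rho_t}{1+\eta\,N*\rho_t},\]
and since $\rho_t$ is a probability measure this quantity is bounded in sup norm by $\eta\|\nabla_x N\|_\infty$ (see \eqref{bds:grad_M-grad_M_inf}). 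Combining gives
\[|\lambda(v\cdot\nabla_x M_t) - \lambda(v\cdot\nabla_x M_\infty)| \leq \eta\chi V_0\|\psi'\|_\infty\|\nabla_x N\|_\infty.\]

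For the $\phi$-factor, the choice of $\gamma$ made in the proof of Lemma \ref{lem:Foster-Lyapunov} ensures that the prefactor in $\phi$ satisfies $1-\gamma v\cdot\nabla_x M - \beta\gamma \psi(v\cdot\nabla_x M)v\cdot\nabla_x M \leq 3/2$ uniformly in $v\in\mathcal{V}$, so $|\phi(x,v)|\leq 2e^{-\gamma M_\infty(x)}$; since $|\mathcal{V}|=1$ we also have $\int_{\mathcal{V}}|\phi(x,v')|\,\mathrm{d}v'\leq 2e^{-\gamma M_\infty(x)}$. Therefore
\[ \Big|\int_{\mathcal{V}}\phi(x,v')\,\mathrm{d}v' - \phi(x,v)\Big| \leq 4\,e^{-\gamma M_\infty(x)}.\]
Putting the two estimates together yields the stated bound. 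The argument is essentially bookkeeping; there is no real obstacle, but one must be careful to note that the weight $e^{-\gamma M_\infty(x)}$ appearing in $\phi$ is the one with $M_\infty$ (not $M_t$), which is precisely what makes the collision term the only source of the discrepancy and lets the factor $e^{-\gamma M_\infty(x)}$ factor out of the final estimate.
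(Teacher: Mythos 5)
Your proposal is correct and follows essentially the same route as the paper: isolate the collision term as the only source of the difference, bound $|\lambda(v\cdot\nabla_x M_t)-\lambda(v\cdot\nabla_x M_\infty)|$ via the Lipschitz bound on $\psi$ and the estimate $|\nabla_x M_t-\nabla_x M_\infty|\leq\eta\|\nabla_x N\|_\infty$ from Proposition \ref{prop:unifeta}, and control $\bigl|\int_{\mathcal{V}}\phi\,\mathrm{d}v'-\phi\bigr|$ by $4e^{-\gamma M_\infty(x)}$ using the pointwise bound $\phi\leq 2e^{-\gamma M_\infty(x)}$ guaranteed by the choice of $\gamma$. The constants match the paper's exactly.
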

	\begin{proof}
		First, using \eqref{bds:grad_M-grad_M_inf} we obtain
		\begin{align} \label{bound:Psi_M-Psi_M_inf}
			|\psi (v \cdot \nabla_x M_\infty) - \psi(v\cdot \nabla_x M)| \leq  \|\psi'\|_\infty |v | |\nabla_x M - \nabla_x M_{\infty}| \leq \eta V_0 \|\psi'\|_\infty   \|\nabla_x N\|_\infty. 
		\end{align} 
		Then, we see that 
		\begin{align*}
			\mathcal{L}_{M_t}^*  \phi - \mathcal{L}_{M_{\infty}}^*\phi &= (\lambda (v \cdot \nabla_x M_t) - \lambda (v \cdot \nabla_x M_\infty) )  \left(\int_{\mathcal{V}} \phi(x,v') \d v' - \phi (x,v)\right) \\
			&= \chi (\psi (v \cdot \nabla_x M_t)- \psi (v \cdot \nabla_x M_\infty)) \left(\int_{\mathcal{V}} \phi(x,v') \d v' - \phi (x,v)\right) \\
			&\leq 4 \eta \chi V_0\|\psi'\|_\infty \|\nabla_x N\|_\infty  e^{-\gamma M_\infty(x)}.
		\end{align*}		
		In the last line of the above inequality, we used the fact that $\gamma$ is chosen so that $\phi \leq 2 e^{-\gamma M_\infty(x)}$. This gives \eqref{ineq:L_*-L_M}.
	\end{proof}
	\begin{lem} \label{lem:moment_weakly_non-linear}
		Let $f$ be the solution of  Equation \eqref{eq:rt-linear} with  the coupling \eqref{eq:weakly_nonlin}.
		If $\eta$ is sufficiently small, then there exists a constant $B>0$ (not depending on $\eta$) such that 
		\begin{align} \label{est:moment_nonlin}
			\int \phi (z)  F(t,z) \d z\leq \frac{A}{B} + e^{-B t}  \int \phi (z)  F_0(z) \d z,
		\end{align} where $A$ is given by \eqref{A} in the proof of Lemma \ref{lem:Foster-Lyapunov} and $\phi$ is given in \eqref{phi}. In fact we have the bound
		\begin{align} \label{bd:f}
			\|F\|_{*} \leq \frac{A}{B} + \|F_0\|_{*}.
		\end{align} Using equivalence of norms we also have
		\begin{align*}
			\|F\|_{**} \leq C^* + 4 \|F_0\|_{**},
		\end{align*} for $C^*>0$ a constant.
	\end{lem}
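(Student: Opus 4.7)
\textbf{(Plan for Lemma \ref{lem:moment_weakly_non-linear}.)} The strategy is to promote the linear Foster--Lyapunov estimate of Lemma \ref{lem:Foster-Lyapunov} to the weakly non-linear setting using the perturbation bound of Lemma \ref{lem:perturb1}. Since $f$ solves $\partial_t f = \mathcal{L}_{M_t} f$, differentiating in time gives
\begin{align*}
\ddt \int \phi(z) f(t,z) \d z = \int (\mathcal{L}_{M_t}^* \phi)(z) \, f(t,z) \d z,
\end{align*}
so the key step is a pointwise inequality of the form $\mathcal{L}_{M_t}^* \phi \leq A - B\,\phi$ with $B>0$ independent of $\eta$.

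First I would apply Lemma \ref{lem:perturb1} to replace $\mathcal{L}_{M_t}^*$ by $\mathcal{L}_{M_\infty}^*$ at the cost of an error bounded by $4\eta\chi V_0\|\psi'\|_\infty\|\nabla_x N\|_\infty e^{-\gamma M_\infty(x)}$. Next I would compute $\mathcal{L}_{M_\infty}^* \phi$ by repeating the expansion of Lemma \ref{lem:Foster-Lyapunov}. The subtlety here is that $\phi$ mixes $M$ (in the polynomial factor) with $M_\infty$ (in the exponential weight), so the computation does not reduce verbatim to the linear case. However, by Proposition \ref{prop:unifeta} we know that $\|\nabla_x M - \nabla_x M_\infty\|_\infty = O(\eta)$ and $\|\hess(M) - \hess(M_\infty)\|_\infty = O(\eta)$, while the Hypotheses \ref{hyp:M}--\ref{hyp:int_bound} hold with $M_\infty$-dependent constants $R, m_*, \tilde{\lambda}$. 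Reproducing the algebra of Lemma \ref{lem:Foster-Lyapunov} with these substitutions and grouping the $O(\eta)$ remainders together yields
\begin{align*}
\mathcal{L}_{M_\infty}^* \phi \leq A - \frac{\gamma\tilde{\lambda}\chi(1-\chi)m_*^k}{6(1+\chi)}\,\phi + C_2 \eta\, e^{-\gamma M_\infty(x)},
\end{align*}
with $A$ as in \eqref{A}. Using the same choice of $\gamma$ as in Lemma \ref{lem:Foster-Lyapunov} one has $e^{-\gamma M_\infty(x)} \leq 2\phi$, and combining the two displays gives
\begin{align*}
\mathcal{L}_{M_t}^* \phi \leq A - \left(\frac{\gamma\tilde{\lambda}\chi(1-\chi)m_*^k}{6(1+\chi)} - C_3\eta\right)\phi.
\end{align*}
Choosing $\eta$ small enough that the bracket is at least some $B>0$ (which, crucially, is independent of $\eta$ because it is inherited entirely from $M_\infty$) and applying Grönwall produces \eqref{est:moment_nonlin}. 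Since $f(t,\cdot)$ is a probability measure we have $\|f\|_* = \int \phi f \d z$, so bounding $e^{-Bt}\leq 1$ yields \eqref{bd:f}. The $\|\cdot\|_{**}$ bound then follows from the two-sided equivalence $\phi(x,v) \asymp e^{\delta\langle x\rangle}$, which is immediate from the fact that the polynomial factor of $\phi$ lies between $1/2$ and $3/2$ and the assumption $C_1-\alpha\langle x\rangle \leq M_\infty(x) \leq C_2-\alpha\langle x\rangle$ sandwiches $e^{-\gamma M_\infty(x)}$ between two constant multiples of $e^{\gamma\alpha\langle x\rangle}$, so one may identify $\delta=\gamma\alpha$ up to a harmless renaming of constants.

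The main obstacle is the bookkeeping: one must verify that the dissipation constant $B$ produced by the Foster--Lyapunov calculation does not itself shrink with $\eta$. This works because both error sources (the perturbation in Lemma \ref{lem:perturb1} and the mismatch between the $M$ inside $\phi$ and the $M_\infty$ inside $\mathcal{L}_{M_\infty}^*$) are $O(\eta)$ times exactly the same weight $e^{-\gamma M_\infty(x)}$, which is comparable to $\phi$. Hence a single smallness condition on $\eta$ (depending only on $M_\infty$, $N$ and the parameters of Lemma \ref{lem:Foster-Lyapunov}) suffices to decouple them from the linear dissipation rate.
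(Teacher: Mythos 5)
Your proposal is correct and follows essentially the same route as the paper: apply Lemma \ref{lem:perturb1} to reduce to $\mathcal{L}_{M_\infty}^*\phi$, invoke the Foster--Lyapunov bound \eqref{ineq:L_*_phi}, absorb the $O(\eta)$ error using $e^{-\gamma M_\infty(x)}\leq 2\phi$, and conclude by Grönwall and norm equivalence. If anything you are slightly more careful than the paper, which cites \eqref{ineq:L_*_phi} directly without commenting on the mismatch between the $M$ in the polynomial factor of $\phi$ and the $M_\infty$ in the exponential weight that you explicitly flag and control.
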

	\begin{proof}
		From Lemma \ref{lem:Foster-Lyapunov}, inequality \eqref{ineq:L_*_phi} we know that 
		\begin{align*}
			\mathcal{L}_{M_\infty}^* \phi \leq A - \frac{\gamma \tilde{\lambda} \chi (1-\chi) m_*^k}{6 (1+\chi)}e^{-\gamma M_\infty(x)}. 
		\end{align*}
		Using \eqref{ineq:L_*-L_M} in Lemma \ref{lem:perturb1} we obtain
		\begin{align*}
			\mathcal{L}_{M_t}^* \phi \leq A - \left( \frac{\gamma \tilde{\lambda} \chi (1-\chi) m_*^k}{6 (1+\chi)} - 4 \eta \chi V_0 \|\psi'\|_\infty \|\nabla_x N\|_\infty \right) e^{-\gamma M_{\infty}(x)}.
		\end{align*} Therefore, if we take $\eta$ such that
		\begin{align*}
			\eta \leq \frac{\tilde{\lambda}\chi  (1-\chi) m_*^k}{48 \chi (1+\chi) V_0  \|\psi'\|_\infty \|\nabla_x N\|_\infty },
		\end{align*} then we have for some constant $B >0$
		\begin{align*}
			\frac{\d}{\d t} \int \phi (z)  F (t,z) \mathrm{d}z\leq - B\int \phi (z)  F (t, z) \d z +A \int F (t,z)\d z. 
		\end{align*} Therefore, since the mass is preserved, by Gronwall's inequality we obtain \eqref{est:moment_nonlin}. 
		We can also turn this into an exponential decay on
		\[ \int e^{-\gamma M_\infty(x)} F(t,z)\d z. \] This gives the result.
	\end{proof}
	\begin{lem} \label{lem:lastlemma} 
		Suppose that $F_t$ is the solution of Equation \eqref{eq:rt-linear} with the coupling \eqref{eq:weakly_nonlin} and $F_\infty$ its steady state solution. Suppose that $\eta$ is small enough so that Lemmas \ref{lem:perturb1} and \ref{lem:moment_weakly_non-linear} are valid. Suppose also that
		\begin{align} \label{bd:eta}
			\|F_0\|_{**}  < \frac{1}{4} \left(\frac{\sigma^2}{4\eta \chi  V_0 D \|\psi'\|_\infty  \|\nabla_x N\|_\infty }-C^*\right),
		\end{align} where $\sigma, D$ and $C^*$ are found in Theorem \ref{thm1}, Proposition \ref{prop:unifeta} and Lemma \ref{lem:moment_weakly_non-linear} respectively. Then we have for some $C>0$ that
		\begin{align*}
			\| F_t - F_\infty\|_{**} \leq Ce^{-\sigma t/2} \|F_0 -F_\infty\|_{**}.
		\end{align*}
	\end{lem}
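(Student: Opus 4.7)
The plan is a Duhamel perturbation argument comparing the weakly non-linear flow to the linear flow frozen at the fixed-point chemoattractant $\tilde M$. Writing $M_t = M_\infty + \log(1 + \eta N * \rho_t)$ with $\rho_t = \int_{\mathcal{V}} f(t,x,v)\d v$, the generator is $\mathcal{L}_{M_t}$ of the form \eqref{defn:L_M_t}. Since $\mathcal{L}_{\tilde M} f_\infty = 0$, subtracting the two equations yields
\begin{equation*}
\partial_t (f_t - f_\infty) = \mathcal{L}_{\tilde M}(f_t - f_\infty) + (\mathcal{L}_{M_t} - \mathcal{L}_{\tilde M}) f_t,
\end{equation*}
so Duhamel's formula gives
\begin{equation*}
f_t - f_\infty = \mathcal{S}_t^{\tilde M}(f_0 - f_\infty) + \int_0^t \mathcal{S}_{t-s}^{\tilde M}\bigl[(\mathcal{L}_{M_s} - \mathcal{L}_{\tilde M}) f_s\bigr]\d s.
\end{equation*}
Because $f_t$ and $f_\infty$ are both probability measures, $f_t - f_\infty$ has zero total mass, and Proposition \ref{prop:unifeta} applies to give $\|\mathcal{S}_t^{\tilde M}(f_t - f_\infty)\|_{**} \leq D e^{-\sigma t} \|f_t - f_\infty\|_{**}$.

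Next, I would estimate the perturbation $(\mathcal{L}_{M_s} - \mathcal{L}_{\tilde M}) f_s$ in the $\|\cdot\|_{**}$ norm using exactly the ingredients already proved in Lemmas \ref{lem:perturb1} and the bound \eqref{bds:grad_M-grad_M_inf}, but now centred at $\tilde M$ rather than at $M_\infty$. Since $\nabla_x M_t - \nabla_x \tilde M$ depends linearly on $\rho_t - \rho_\infty$ through the convolution with $\nabla_x N$, the Lipschitz property of $\psi$ yields
\begin{equation*}
\|(\mathcal{L}_{M_s} - \mathcal{L}_{\tilde M}) f_s\|_{**} \;\leq\; 4\eta \chi V_0 \|\psi'\|_\infty \|\nabla_x N\|_\infty \, \|f_s - f_\infty\|_{**} \, \|f_s\|_{**},
\end{equation*}
where one factor $\|f_s - f_\infty\|_{**}$ comes from bounding $\|\rho_s - \rho_\infty\|_{L^1}$ and the other $\|f_s\|_{**}$ comes from integrating the perturbed collision kernel against $f_s$ (the weight in $\|\cdot\|_{**}$ is controlled by $e^{-\gamma M_\infty}$ up to a uniform constant, so the factor of $2$ in Lemma \ref{lem:perturb1} becomes a harmless numerical constant). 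The uniform-in-time moment bound of Lemma \ref{lem:moment_weakly_non-linear} then gives $\|f_s\|_{**} \leq C^* + 4\|f_0\|_{**}$.

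Inserting these estimates into Duhamel and setting $K := 4\eta\chi V_0 \|\psi'\|_\infty \|\nabla_x N\|_\infty (C^* + 4\|f_0\|_{**})$, we obtain
\begin{equation*}
\|f_t - f_\infty\|_{**} \leq D e^{-\sigma t} \|f_0 - f_\infty\|_{**} + DK \int_0^t e^{-\sigma (t-s)} \|f_s - f_\infty\|_{**} \d s.
\end{equation*}
Multiplying through by $e^{\sigma t}$ and applying Gronwall's inequality to $u(t) := e^{\sigma t}\|f_t - f_\infty\|_{**}$ yields
\begin{equation*}
\|f_t - f_\infty\|_{**} \leq D \|f_0 - f_\infty\|_{**}\, e^{-(\sigma - DK)t}.
\end{equation*}
The smallness assumption \eqref{bd:eta} on $\|f_0\|_{**}$ is exactly calibrated so that $DK \leq \sigma/2$, which gives the claimed exponential rate $e^{-\sigma t/2}$ with $C = D$.

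The main obstacle is the circular structure of the Gronwall argument: the perturbation term carries a factor of $\|f_s - f_\infty\|_{**}$ itself (quadratic nonlinearity in the deviation), multiplied by $\|f_s\|_{**}$, and must be absorbed into the linear contraction rate. Closing this loop is precisely what forces the joint smallness conditions on $\eta$ and on $\|f_0\|_{**}$, and it is crucial that the moment bound \eqref{bd:f} from Lemma \ref{lem:moment_weakly_non-linear} is uniform in time rather than deteriorating, which is why that lemma was proved first.
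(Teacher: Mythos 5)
Your proposal is correct and follows essentially the same route as the paper: a Duhamel expansion around the frozen linear semigroup $\mathcal{S}_t^{\tilde M}$, the perturbation bound $\|(\mathcal{L}_{M_s}-\mathcal{L}_{\tilde M})f_s\|_{**}\lesssim \eta\|f_s-f_\infty\|_{**}\|f_s\|_{**}$ from the Lipschitz property of $\psi$ and \eqref{bds:grad_M-grad_M_inf}, the uniform moment bound of Lemma \ref{lem:moment_weakly_non-linear} to control $\|f_s\|_{**}$, and a Gronwall closure using the smallness of $\eta$ and $\|f_0\|_{**}$ to absorb the quadratic term into half the linear rate. The only cosmetic difference is that you write Duhamel directly for $f_t-f_\infty$ rather than for $f_t$ and subtracting $f_\infty$ afterwards, which changes nothing.
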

	\begin{proof} We rewrite the weakly non-linear equation \eqref{eq:rt-linear}-\eqref{eq:weakly_nonlin} as
		\begin{align*}
			\p_t F(t,x,v) = \mathcal{L}_{M_t} F(t,x,v) = \mathcal{L}_{\tilde{M}}  F(t,x,v) - (\mathcal{L}_{\tilde{M}} - \mathcal{L}_{M_t} )F(t,x,v), 
		\end{align*} where $\tilde{M}$ is the fixed point of $G$ we found in Proposition \ref{prop:ss_nonlin}. 
	
	Let us call the last term $h = h(t,x,v): = (\mathcal{L}_{\tilde{M}} - \mathcal{L}_{M_t} )F$. Then by Duhamel's formula we have
		\begin{align} \label{eq:nonlin_soln}
			F_t = F(t,x,v) = \mathcal{S}_t ^{\tilde{M}}F_0(x,v) + \int_{0}^{t}  \mathcal{S}_{t- s}^{\tilde{M}} h(s,x,v) \d s.
		\end{align} where $(\mathcal{S}_t^{\tilde{M}})_{t \geq 0}$ is the semigroup associated to Equation \eqref{eq:D2}. Using definitions \eqref{defn:L_M_t} and \eqref{defn:L_M_inf} we have 
		\begin{align*}
			h(t,x,v)= \chi \left( \int_{\mathcal{V}} \left(\psi(v' \cdot \nabla_x M_t)-\psi'(v' \cdot \nabla_x \tilde{M})\right) F(t,x,v')\d v' - \left( \psi(v \cdot \nabla_x M_t) - \psi(v \cdot \nabla_x \tilde{M}) \right)F \right). 
		\end{align*}
		Then, using \eqref{bds:grad_M-grad_M_inf} and \eqref{est:rho_f} from Propositions \ref{prop:unifeta} and \ref{prop:ss_nonlin} respectively, we have
		\begin{align*} 
			\|h\|_{**} &\leq  2\chi \|\psi'\|_\infty V_0 \|\nabla_x M_t -\nabla_x \tilde{M}\|_\infty \|F\|_{**} \\
			& \leq 2\chi \|\psi'\|_\infty V_0 \|\nabla_x \log \left( 1+ \eta N * \rho\right) - \nabla_x \log \left(1+ \eta N*\rho_\infty\right)\|_\infty \|F\|_{**} \\
			& \leq 2 \chi \eta V_0  \|\psi'\|_\infty \|\nabla_x N\|_\infty   \|F_t - F_\infty\|_{**} \|F \|_{**}.
		\end{align*}
		Therefore we obtain 
		\begin{align} \label{bd:h}
			\|h\|_{**} \leq C \eta  \|F_t - F_\infty\|_{**}\|F_t \|_{**}
		\end{align} where $C$ is a constant depending on $\chi,\psi, V_0, N$. Now we subtract $F_{\infty}$ from both sides of \eqref{eq:nonlin_soln} and take the norms to get
		\begin{align} \label{eq:nonlin_norm}
			\|F_t - F_\infty\|_{**}   = 	\| \mathcal{S}_t^{\tilde{M}} F_0 - F_\infty  \|_{**}  + \left 	\|  \int_0^t \mathcal{S}_{t-s}^{\tilde{M}} h(s) \d s\right \|_{**}.
		\end{align}
		We can bound the first term in the right hand side of  \eqref{eq:nonlin_norm} by the result of Theorem \ref{thm1} and the second term by \eqref{bd:h}. Therefore we obtain
		\begin{align*}
			\|F_t - F_\infty\|_{**} \leq C_1e^{-\sigma t}\|F_0 - F_\infty\|_{**} + C\eta \int_0^t e^{-\sigma (t-s)}\|F_s - F_\infty\|_{**} \|F_t\|_{**}\d s,
		\end{align*} where $C>0$, the constant in \eqref{bd:h}, depends on $\chi, \psi, V_0, N$.  By the constraint \eqref{bd:eta} on $\eta$,
		and the bound on $\|F_t\|_{**}$ from Lemma \ref{lem:moment_weakly_non-linear} we have
		\begin{align*}
			\|F_t - F_\infty\|_{**} \leq C_1e^{-\sigma t}\|F_0 - F_\infty\|_{**} + \frac{\sigma}{2} \int_0^t e^{-\sigma (t-s)}\|F_s - F_\infty\|_{**}\d s.
		\end{align*} By Gronwall's inequality this leads to 
		\begin{align*}
			\|F_t - F_\infty\|_{**} \leq C e^{-\sigma t /2} \|F_0 - F_\infty\|_{**} 
		\end{align*} for some constant $C>0$.
		This finishes the proof.
	\end{proof}
	\begin{proof}[Proof of Theorem \ref{thm2}]
		Proposition \ref{prop:ss_nonlin} gives a unique steady state solution for the weakly non-linear equation \eqref{eq:rt-linear}-\eqref{eq:weakly_nonlin}. The exponential relaxation to the steady state solution follows from Lemma \ref{lem:lastlemma}. This completes the proof.
	\end{proof}

	\section{Discussion and future research} \label{sec:discussion}
	
	\subsection{Existence of steady states for fully non-linear models}	
	In this section we discuss the relationship of our work to the much more challenging problem of finding steady states to the run and tumble equation with the fully nonlinear coupling of the form
	\[ -\Delta S + S = \rho.\]  
	Our goal is to describe hopeful direction for future research as well as giving an idea of why we consider the weakly non-linear coupling studied here as a possible stepping stone towards this more complex model. In this regard, we believe that a Schauder fixed point argument is a plausible strategy for finding a steady state of the fully non-linear coupling. We suggest looking for fixed points of the following function $\tilde{G}(M) = \log S$ where $S$ is the solution to 
	\[ -\Delta S + S = \rho^M, \] where $\rho^M$ is the spatial marginal of the unique steady state of \eqref{eq:rt-linear} with the log-chemoattractant $M$.
	
	The first step is to determine if the estimates we obtain in Section \ref{sec:Foster-Lyapunov} (the Foster-Lyapunov part) would be good enough to run such a fixed point argument, that is, we would like to see if the bounds we find on 
	\[ \int f^M \phi \d z, \] are sufficient to find a compact, convex set of possible chemoattractant densitites which is preserved by $\tilde{G}$. Since there is a one-to-one correspondence between $\tilde{G}(M)$ and $\rho^M$, this is equivalent to finding a set of possible $\rho^M$. A standard way of showing the necessary compactness would be to show tightness of the measures $\rho^M$, and this can be achieved by proving moment estimates (such as are found in the Foster-Lyapunov part). However, we encounter a problem that at each iteration of such a scheme, we lose weight in our moment estimate.

	In this paper, we experiment with a toy non-linear model, where we could use the estimates coming from the Foster-Lyapunov part to be able to use a fixed point argument. This gives us a better understanding of how this type of argument should work. We briefly describe our process for choosing this coupling.
	
	The first idea was to come up with a perturbative setting to try a coupling of the form
	\begin{align}
		\label{coupling1}
		-\Delta S + S = \rho_* + \eta \rho, 
	\end{align} where $\rho_* $ is a fixed spatial density and $\eta$ is a small number. However, we notice that this coupling has essentially exactly the same problem with a loss of weight as the fully non-linear coupling. In order to create a coupling we can deal with, the $\eta \rho $ in the right hand side of \eqref{coupling1} needs to be multiplied by a function of $x$ that decays sufficiently fast at infinity. Therefore, we can try a coupling that looks like 
	\begin{align}	\label{coupling2}
		-\Delta S + S = \rho_*  (1+\eta\rho).
	\end{align} 
	Then, $S$, which is the solution of  \eqref{coupling2}, is given by 
	\begin{align} \label{coupling3}
		S = N*(\rho_*(1+\eta \rho)),
	\end{align} where $N =  \mathcal{F}^{-1} (1/(1+ |\xi|^2))$, and $\mathcal{F}$ represents the Fourier transform. Then, we further simplify \eqref{coupling3} as 
	\begin{align*}
		S= S_\infty(1+\eta N*\rho)
	\end{align*} where $N$ is now a positive, smooth function and $S_{\infty}$ is a smooth function. Considering this simplification allows us to keep algebra simple without losing the behaviour of \eqref{coupling3}. By this strategy we obtain the weakly non-linear, nonlocal coupling introduced in \eqref{eq:weakly_nonlin}. Even though this weakly non-linear coupling serves as a toy model we still retain the idea of a fixed point argument on the chemoattractant profile.
	
	Our contraction mapping argument is an adaption of what was originally an argument to show continuity of a map $\tilde{G}$ defined on a fully non-linear coupling. In order to carry out a Schauder fixed point argument, continuity of such $\tilde{G}$ would be needed.
	
	Finally, the toy model we introduced, even though biologically not realistic, allows us to understand better how to use the arguments presented in this paper in the fully non-linear setting. This is a subject of ongoing work. 
	
	\subsection{Physically more realistic tumbling kernels}
	The methods used here should be able to accomodate more complex tumbling kernels. In particular we would like to look at models with unbounded velocity spaces (where we would expect to see polynomial rather than exponential rates of convergence) and tumbling kernels where the bacteria can only turn by a bounded angle. 
	
	The experiments conducted in \cite{BB72} shows that, for peritrichous bacteria such as \emph{E. coli}, the tumbling kernel $\kappa$ depends only on the relative angle $\theta$ between the pre- and post-tumbling velocities $v$ and $v'$ respectively. Particularly, for bacteria \emph{E. coli}, the tumbling kernel $\kappa$ is given by 
	\begin{align} \label{eq:K_coli}
		\kappa (v, v') = \frac{g(\theta)}{2 \pi \sin \theta} \quad \text{where} \quad \theta = \arccos \left(\frac{v \cdot v'}{|v||v'|}\right), 
	\end{align} where $g(\theta)$ is the sixth order polynomial satisfying $g(0) = g(\pi) =0$ (see \cite{OH02, BFSC96}).

	They also suggest the following form of tumbling rate
	\[ \lambda = \lambda_0 \exp \left( - \frac{c_1 k_D}{(k_D +S)^2} v\cdot \nabla_x S \right),\] where $\lambda_0, c_1$ and $k_D$ are constants and $S$ is the chemoattractant density. In an ongoing work we study these more realistic versions of the run and tumble model.

	\section*{Acknowledgements}
	The authors wish to thank E. Bouin, J. A. Cañizo, V. Calvez, and G. Raoul for useful comments and discussions. J. Evans was supported by FSPM postdoctoral fellowship and the grant ANR-17-CE40-0030 (both until July 2020).
	H. Yolda\c{s} was supported by the European Research Council (ERC) under the European Union’s Horizon 2020 research and innovation programme (grant agreement No  865711). The authors gratefully acknowledge the support of the Hausdorff Institute for Mathematics (Bonn) through Junior Trimester Program on Kinetic Theory.

\appendix
\section{Cauchy theory for the weakly non-linear equation}
For $T>0$ fixed given $f_t \in L^\infty_t ( [0,T];\mathcal{P}_{x,v})$ we define the function 
\[ M^f (t,x) = M_\infty + \log \left ( 1+ \eta \int N(x-y) f_t(y, v) \d y \d v \right ).  \] Then, if we fix initial data $f_0 \in \mathcal{P}_{x,v}$ we can define a function from $L^\infty_t ( [0,T];\mathcal{P}_{x,v})$ to itself via


\begin{multline*} 
	\mathscr{H}(f)_t = e^{-\int_0^t \lambda(v \cdot \nabla_x M^f (s,x-vs) ) \mathrm{d}s} f_0(x-vt, v)   \\
	 \quad + \int_0^t e^{- \int_s^t \lambda( v \nabla_x M^f (r, x-vr)) \d r }\int_{\mathcal{V}} \lambda(v' \cdot \nabla_x M^f(s,x-v(t-s))) f_s (x, v') \mathrm{d}v' \mathrm{d}s.
\end{multline*}

\begin{lem} \label{lem:contraction}
Suppose $\psi$ is uniformly Lipschitz continuous then there exists a $T_*>0$ depending only on $\psi, \chi$ such that if $T \leq T_*$ then $\mathscr{H}$ is a contraction.
\end{lem}
\begin{proof}
We have the following computations
	\begin{align*} \sup_{t \leq T} \Big \| &e^{-\int_0^t \lambda(v \cdot \nabla_x M^{f^1} (s,x-vs) ) \d s } f_0(x-vt, v)  - e^{-\int_0^t \lambda(v \cdot \nabla_x M^{f^2} (s,x-vs) ) \d s } f_0(x-vt, v) \Big \|_{TV}   \\
		& \leq \sup_{t,x,v} \Big | e^{-\int_0^t \lambda(v \cdot \nabla_x M^{f^1} (s,x-vs) ) \d s } - e^{ -\int_0^t \lambda(v \cdot \nabla_x M^{f^2} (s,x-vs) ) \d s } \Big   | \\
		& \leq T \sup_{t,x,v} \Big | \lambda(v \cdot \nabla_x M^{f^1}(t,x-vt)) - \lambda(v \cdot \nabla_x M^{f^2}(t, x-vt)) \Big | \\
		& \leq T \eta V_0 \|\psi\|_{Lip} \left |\nabla_x M^{f^1}(t,x-vt) - \nabla_x M^{f^2}(t,x-vt) \right | \\
		& \leq T \eta V_0 \|\psi\|_{Lip} \|\nabla_x N\|_\infty \sup_{t \leq T} \|f^1_t-f^2_t\|_{TV}.
\end{align*}
We also have
\begin{align*}
	 \Big \| \int_{\mathcal{V}} \lambda(v' \cdot \nabla_x M^{f^1} &(s, x-v(t-s))) f^1_s(x,v')\mathrm{d}v' - \int_{\mathcal{V}} \lambda(v' \cdot \nabla_x M^{f^2} (s, x-v(t-s)))f^2_s(x,v')\mathrm{d}v' \Big \|_{TV}\\
	 &\leq |B(V_0)|\| \lambda(v' \cdot \nabla_x M^{f_1} )- \lambda(v' \cdot \nabla_x M^{f_2}) \|_\infty + (1+\chi)|B(V_0)| \| f_t^1- f_t^2\|_{TV}\\
	 &\leq C (V_0,\eta, \|\psi\|_{Lip}, \chi) \sup_{t \leq T} \|f^1_t - f^2_t\|_{TV}.
\end{align*}
which implies working as in the first computation
	\begin{multline*}
		\sup_{t \leq T} \Big \| \int_0^t e^{- \int_s^t \lambda( v \nabla_x M^{f^1} (r, x-vr)) \d r } \int_{\mathcal{V}} \lambda(v' \cdot \nabla_x M^{f^1}(s,x-v(t-s))) f^1_s (x, v') \d v' \d s \\
		 - \int_0^t e^{ - \int_s^t \lambda( v \nabla_x M^{f^2} (r, x-vr)) \d r }\int_{\mathcal{V}} \lambda(v' \cdot \nabla_x M^{f^2}(s,x-v(t-s))) f^2_s (x, v') \d v' \d s \Big \|_{TV} \\
		\leq TC(V_0, \|\psi\|_{Lip}, \eta, \chi) \sup_{t \leq T} \|f^1_t - f^2_t\|_{TV}.
\end{multline*}
Then putting this together gives
\[ \sup_{t \leq T} \|\mathscr{H}(f^1)_t - \mathscr{H}(f^2)_t\|_{TV} \leq T C(V_0, \|\psi\|_{Lip}, \eta, \chi) \sup_{t \leq T} \|f^1_t -f^2_t\|_{TV}. \] Therefore if $T$ is small enough $\mathscr{H}$ is a contraction. 
\end{proof}

\begin{prp}
Equation \eqref{eq:rt-linear} with the coupling \eqref{eq:weakly_nonlin} has a unique global solution in $L^\infty([0, \infty); \mathcal{P}_{x,v})$.
\end{prp}
\begin{proof}
On the time interval $[0,T]$ with $T \leq T_*$ from Lemma \ref{lem:contraction} above we have a unique fixed point of the map $\mathscr{H}$ which gives us a solution of \eqref{eq:rt-linear} with the coupling \eqref{eq:weakly_nonlin} on this time interval. Then since $T_*$ does not depend on $f_0$, we can iterate this forwards in time to build solutions on arbitrarily long time intervals.
\end{proof}
	
	\bibliography{Run-and-Tumble}

\end{document}